\documentclass[12pt,a4paper]{amsart}
\usepackage{amsmath,amssymb,a4wide,amsthm,ifthen,hyperref,xcolor,enumerate,enumitem}
\usepackage[utf8]{inputenc}

\usepackage{graphicx}
\usepackage{url}
\usepackage{ulem}
\usepackage{epstopdf}
\usepackage[section]{placeins}
\usepackage{natbib}
\usepackage{flafter} 
\usepackage{multirow}

\newtheorem{theo}{Theorem}
\newtheorem{prop}{Proposition}
\newtheorem{lemma}{Lemma}
\theoremstyle{remark}

\begin{document}

\title[Variable selection in sparse high-dimensional GLARMA models]{Variable selection in sparse high-dimensional GLARMA models}

\date{\today}

\author{C. L\'evy-Leduc}
\address{UMR MIA-Paris, AgroParisTech, INRA, Universit\'e Paris-Saclay, 75005, Paris, France}
\email{celine.levy-leduc@agroparistech.fr}
\author{S. Ouadah}
\address{UMR MIA-Paris, AgroParisTech, INRA, Universit\'e  Paris-Saclay, 75005, Paris, France}
\email{sarah.ouadah@agroparistech.fr}
\author{L. Sansonnet}
\address{UMR MIA-Paris, AgroParisTech, INRA, Universit\'e Paris-Saclay, 75005, Paris, France}
\email{laure.sansonnet@agroparistech.fr}

\keywords{GLARMA models; high-dimensional statistics; discrete-valued time series}

\maketitle

\begin{abstract}
  In this paper, we propose a novel variable selection approach in the framework of sparse high-dimensional GLARMA models.
  It consists in combining the estimation of the autoregressive moving average (ARMA) coefficients of these models with regularized methods designed for Generalized Linear Models (GLM). 
 The properties of our approach are investigated both from a theoretical and a numerical point of view. More
precisely, we establish \textcolor{black}{in a specific case} the consistency of the ARMA part coefficient estimators. We explain how to implement our approach and we show that it is
very attractive since it benefits from a low computational load. We also assess the performance of
our methodology using synthetic data and compare it with alternative approaches. Our numerical
experiments show that combining the estimation of the ARMA part coefficients with regularized methods designed for GLM
dramatically improves the variable selection performance.
\end{abstract}

\section{Introduction}

\textcolor{black}{Discrete-valued time series arise in a wide variety of fields ranging from finance to molecular biology and public health.
For instance, we can mention the number of transactions in stocks in the finance field, see \cite{brannas:quoreshi:2010}.
In the field of molecular biology, modeling RNA-Seq kinetics data is a challenging issue, see \cite{Thorne:2018} and in the public health
context, there is an interest in the modeling of daily asthma presentations in a given hospital, see \cite{SOUZA:2014}.}


The literature on modeling discrete-valued time series is becoming increasingly abundant, see \cite{handbook:2016} for a review. 
Different classes of models have been proposed such as the Integer Autoregressive Moving Average 
(INARMA) models and the generalized state space models. 

The Integer Autoregressive process of order 1 (INAR(1)) was first introduced by \cite{McKenzie:1985} and the Integer-valued Moving Average 
(INMA) process is described in \cite{Al-Osh:1988}. One of the attractive features of INARMA processes is that their autocorrelation structure is similar 
to the one of autoregressive moving average (ARMA) models. However, it has to be noticed that statistical inference in these models is generally complicated and requires to develop 
intensive computational approaches such as the 
efficient MCMC algorithm devised by \cite{Neal:rao:2007} for INARMA processes of known AR and MA orders. This strategy was extended 
to unknown AR and MA orders by \cite{enciso:nea:rao:2009}.
 For further references on INARMA models, we refer the reader to \cite{weiss:dts}. 





The other important class of models for discrete-valued time series is the one of generalized state space models which can have a parameter-driven and an observation-driven version, 
see \cite{davis:1999} for a review. 

The main difference between \textcolor{black}{these two versions} is that in parameter-driven models, the state vector evolves independently of the past history
of the observations whereas the state vector depends on the past observations in observation-driven models. More precisely, in parameter-driven models, let $(\nu_t)$ be a stationary process,
the observations $Y_t$ are thus modeled as follows: conditionally on $(\nu_t)$, $Y_t$ has a Poisson distribution of parameter $\exp(\beta_0^\star+\sum_{i=1}^p\beta_i^\star x_{t,i}+\nu_t)$,
where the $x_{t,i}$'s are the $p$ regressor variables (or covariates). Estimating the parameters in such models has a very high computational load, see \cite{jung:2001}.

Observation-driven models initially proposed by \cite{cox:1981} and further studied in \cite{zeger:qaqish:1988} do not have this computational drawback and are thus considered as a promising alternative to parameter-driven models.  Different kinds of observation-driven models can be found in the literature: 
the Generalized Linear Autoregressive Moving Average (GLARMA) models introduced by \cite{davis:1999}
 and further studied in \cite{davis:dunsmuir:streett:2003,davis:dunsmuir:street:2005,dunsmuir:2015} and the (log-)linear Poisson autoregressive models introduced in 
\cite{fokianos:2009,fokianos:2011,fokianos:2012}. Note that GLARMA models cannot be seen as a particular case of the log-linear Poisson autoregressive models.

\textcolor{black}{In the following, we shall consider the GLARMA model introduced in \cite{davis:dunsmuir:street:2005} with additional covariates. More precisely,} 
given the past history $\mathcal{F}_{t-1}=\sigma(Y_s,s\leq t-1)$, \textcolor{black}{we assume that}
\begin{equation}\label{eq:Yt}
Y_t|\mathcal{F}_{t-1}\sim\mathcal{P}\left(\mu_t^\star\right),
\end{equation}
where $\mathcal{P}(\mu)$ denotes the Poisson distribution with mean $\mu$. In (\ref{eq:Yt}),
\begin{equation}\label{eq:mut_Wt}
\mu_t^\star=\exp(W_t^\star) \textrm{ with } W_t^\star=\beta_0^\star+\sum_{i=1}^p\beta_i^\star x_{t,i}+Z_t^\star,
\end{equation}
where the $x_{t,i}$'s are the $p$ regressor variables ($p\geq 1$),
\begin{equation}\label{eq:Zt}
Z_t^\star=\sum_{j=1}^q \gamma_j^\star E_{t-j}^\star \textrm{ with } E_t^\star=\frac{Y_t-\mu_t^\star}{\mu_t^\star}=Y_t\exp(-W_t^\star)-1,
\end{equation}
where $1\leq q\leq\infty$
and $E_t^\star=0$ for all $t\leq 0$. \textcolor{black}{Here, the $E_t^\star$'s correspond to the working residuals in classical Generalized Linear Models (GLM), which means that we limit ourselves to the case 
$\lambda=1$ in the more general definition:
$
 E_t^\star=(Y_t-\mu_t^\star){\mu_t^{\star}}^{-\lambda}
$. Note that in the case where $q=\infty$, $(Z_t^\star)$ satisfies the ARMA-like recursions given in Equation (4) of \cite{davis:dunsmuir:street:2005}.
The model defined by (\ref{eq:Yt}), (\ref{eq:mut_Wt}) and (\ref{eq:Zt}) is thus referred as a GLARMA model.}

The main goal of this paper is to introduce a novel variable selection approach in the deterministic part \textcolor{black}{(covariates)} of high-dimensional sparse 
GLARMA models that is in (\ref{eq:Yt}) and (\ref{eq:mut_Wt})
where $p$ is large and when the vector of the $\beta_i$'s is sparse. \textcolor{black}{The novel approach that we propose consists in} combining a procedure 
for estimating the ARMA part coefficients with regularized methods designed for GLM.

The paper is organized as follows. We firstly describe an estimation procedure for the ARMA part of the GLARMA model defined in (\ref{eq:Yt}), (\ref{eq:mut_Wt}) and (\ref{eq:Zt}), 
see Section \ref{sec:estim}, 
and establish its consistency in \textcolor{black}{a specific case, see Section \ref{sec:consistency}.}
Secondly, we propose a novel variable selection approach in the regression part of the sparse high-dimensional model (\ref{eq:Yt}) and explain how to combine it with 
the estimation procedure of the ARMA part coefficients, see Section \ref{sec:variable}. \textcolor{black}{The practical implementation of our approach is given in Section \ref{sec:practical}.}
Thirdly, in Section \ref{sec:num}, some numerical experiments are provided to illustrate our method and to compare its performance to alternative approaches
on finite sample size data. The proofs of the theoretical results are given in Section \ref{sec:proofs}.


\section{Statistical inference}\label{sec:stat_inf}

\subsection{Estimation procedure}\label{sec:estim}


For estimating the parameter $\boldsymbol{\delta}^\star=(\boldsymbol{\beta}^{\star\prime},\boldsymbol{\gamma}^{\star\prime})$
where $\boldsymbol{\beta}^\star=(\beta_0^\star,\beta_1^\star,\dots,\beta_p^\star)'$ is the vector of regressor coefficients defined in (\ref{eq:mut_Wt})
and $\boldsymbol{\gamma}^\star=(\gamma_1^\star,\dots,\gamma_q^\star)'$ is the vector of the ARMA part coefficients defined in (\ref{eq:Zt}), we maximize the following criterion,
based on the conditional log-likelihood, with respect to $\boldsymbol{\delta}=(\boldsymbol{\beta}',\boldsymbol{\gamma}')$, with
$\boldsymbol{\beta}=(\beta_0,\beta_1,\dots,\beta_p)'$ and $\boldsymbol{\gamma}=(\gamma_1,\dots,\gamma_q)'$:
\begin{equation}\label{eq:likelihood}
L(\boldsymbol{\delta})=\sum_{t=1}^n\left(Y_t W_t(\boldsymbol{\delta})-\exp(W_t(\boldsymbol{\delta}))\right),
\end{equation}
where 
\begin{equation}\label{eq:Wt}
W_t(\boldsymbol{\delta})=\boldsymbol{\beta}'x_t+Z_t(\boldsymbol{\delta})=\beta_0+\sum_{i=1}^p\beta_i x_{t,i}+\sum_{j=1}^q \gamma_j E_{t-j}(\boldsymbol{\delta}),
\end{equation}
with $x_t=(x_{t,0},x_{t,1},\dots,x_{t,p})'$, $x_{t,0}=1$ for all $t$ and

\begin{eqnarray}
E_t(\boldsymbol{\delta})=Y_t\exp(-W_t(\boldsymbol{\delta}))-1,\mbox{ if }t>0\mbox{ and }E_t(\boldsymbol{\delta})=0\mbox{, if }t\leq 0.
\label{eq:Et}
\end{eqnarray}
%
For further details on the choice of this criterion, we refer the reader to \cite{davis:dunsmuir:street:2005}.
To obtain $\widehat{\boldsymbol{\delta}}$ defined by
\begin{equation}\label{eq:delta_hat}
\widehat{\boldsymbol{\delta}}=\textrm{Argmax}_{\boldsymbol{\delta}} \; L(\boldsymbol{\delta}),
\end{equation}
we consider the first derivatives of $L$:
\begin{equation*}
\frac{\partial L}{\partial \boldsymbol{\delta}}(\boldsymbol{\delta})=\sum_{t=1}^n(Y_t-\exp(W_t(\boldsymbol{\delta}))\frac{\partial W_t}{\partial \boldsymbol{\delta}}(\boldsymbol{\delta}),
\end{equation*}
where 
\begin{equation*}
\frac{\partial W_t}{\partial \boldsymbol{\delta}}(\boldsymbol{\delta})=\frac{\partial\boldsymbol{\beta}' x_t}{\partial \boldsymbol{\delta}}+\frac{\partial Z_t}{\partial \boldsymbol{\delta}}
(\boldsymbol{\delta}),
\end{equation*}
$\boldsymbol{\beta}$, $x_t$ and $Z_t$ being defined in (\ref{eq:Wt}). 
\textcolor{black}{The computations of the first derivatives of $W_t$ are detailed in Section \ref{subsub:first_derive}. }

Since the first derivatives of $W_t$ are recursively defined,  it is not possible to obtain a closed-form formula for $\widehat{\boldsymbol{\delta}}$. 
Thus, in order to compute $\widehat{\boldsymbol{\delta}}$, we shall use
the following Newton-Raphson algorithm. 
More precisely, we start from an initial value for $\boldsymbol{\delta}$ denoted 
$\boldsymbol{\delta}^{(0)}$. Then, we use the following recursion for $r\geq 1$: 
\begin{equation}\label{eq:newton_raphson}
\boldsymbol{\delta}^{(r)}=\boldsymbol{\delta}^{(r-1)}-\frac{\partial^2 L}{\partial \boldsymbol{\delta}'\partial \boldsymbol{\delta}}(\boldsymbol{\delta}^{(r-1)})^{-1}\frac{\partial L}{\partial \boldsymbol{\delta}}(\boldsymbol{\delta}^{(r-1)}),
\end{equation}
\textcolor{black}{where $\frac{\partial^2 L}{\partial \boldsymbol{\delta}'\partial \boldsymbol{\delta}}$ corresponds to the Hessian matrix of $L$ and is defined in (\ref{eq:def:hess}).}
Hence, it requires the computation of the first and second derivatives of $L$. 
We already explained how to compute the first derivatives of $L$. As for the second derivatives of $L$, it can be obtained as follows:

\begin{equation}\label{eq:def:hess}
\frac{\partial^2 L}{\partial \boldsymbol{\delta}'\partial \boldsymbol{\delta}}(\boldsymbol{\delta})
=\sum_{t=1}^n(Y_t-\exp(W_t(\boldsymbol{\delta}))\frac{\partial^2 W_t}{\partial \boldsymbol{\delta}'\partial\boldsymbol{\delta}}(\boldsymbol{\delta})
-\sum_{t=1}^n\exp(W_t(\boldsymbol{\delta}))\frac{\partial W_t}{\partial \boldsymbol{\delta}'}(\boldsymbol{\delta})\frac{\partial W_t}{\partial \boldsymbol{\delta}}(\boldsymbol{\delta}).
\end{equation}
\textcolor{black}{The computations of the second derivatives of $W_t$ are detailed in Section \ref{subsub:second_derive}. }

Further details on the choice of $\boldsymbol{\delta}^{(0)}$ and the number of iterations to use will be given in Section \ref{sec:num}.

\subsection{Variable selection}\label{sec:variable}

To perform variable selection in the $\beta_i^\star$ of Model (\ref{eq:mut_Wt}) that is to obtain a sparse estimator of $\beta_i^\star$, 
we shall use a methodology inspired by \cite{friedman:hastie:tibshirani:2010} 
for fitting generalized linear models with $\ell_1$ penalties. It consists in penalizing a quadratic approximation to the log-likelihood obtained by a Taylor expansion.
Hence, denoting $\widetilde{\boldsymbol{\beta}}=(\widetilde{\beta}_0,\dots,\widetilde{\beta}_p)'$ the current estimate of the parameter $\boldsymbol{\beta}^\star=({\beta_0}^\star,\dots,{\beta_p}^\star)'$, 
we obtain the following quadratic approximation where $\widehat{\boldsymbol{\gamma}}=(\widehat{\gamma}_1,\dots,\widehat{\gamma}_q)'$ is the estimate of 
$\boldsymbol{\gamma}^\star=({\gamma_1}^\star,\dots,{\gamma_q}^\star)'$ obtained in Section \ref{sec:estim}:
\begin{align*}
\widetilde{L}(\boldsymbol{\beta}):=L(\beta_0,\dots,\beta_p,\widehat{\gamma})=\widetilde{L}(\widetilde{\boldsymbol{\beta}})
+\frac{\partial L}{\partial \boldsymbol{\beta}}(\widetilde{\boldsymbol{\beta}},\widehat{\boldsymbol{\gamma}})(\boldsymbol{\beta}-\widetilde{\boldsymbol{\beta}})
+\frac12 (\boldsymbol{\beta}-\widetilde{\boldsymbol{\beta}})'
\frac{\partial^2 L}{\partial \boldsymbol{\beta}\partial \boldsymbol{\beta}'}(\widetilde{\boldsymbol{\beta}},\widehat{\boldsymbol{\gamma}})
(\boldsymbol{\beta}-\widetilde{\boldsymbol{\beta}}),
\end{align*}
where
$$\frac{\partial L}{\partial \boldsymbol{\beta}}=\left(\frac{\partial L}{\partial \beta_0},\dots,\frac{\partial L}{\partial \beta_p}\right)
\textrm{ and }
\frac{\partial^2 L}{\partial \boldsymbol{\beta}\partial \boldsymbol{\beta}'}=\left(\frac{\partial^2 L}{\partial \beta_j \partial \beta_k}\right)_{0\leq j,k\leq p}.$$
Thus,
\begin{align}\label{eq:Ltilde}
\widetilde{L}(\boldsymbol{\beta})=\widetilde{L}(\widetilde{\boldsymbol{\beta}})+\frac{\partial L}{\partial \boldsymbol{\beta}}(\widetilde{\boldsymbol{\beta}},\widehat{\boldsymbol{\gamma}})
U(\boldsymbol{\nu}-\widetilde{\boldsymbol{\nu}})-\frac12 (\boldsymbol{\nu}-\widetilde{\boldsymbol{\nu}})' \Lambda (\boldsymbol{\nu}-\widetilde{\boldsymbol{\nu}}),
\end{align}
where $U\Lambda U'$ is the singular value decomposition of the positive semidefinite symmetric matrix 
$-\frac{\partial^2 L}{\partial \boldsymbol{\beta}\partial \boldsymbol{\beta}'}(\widetilde{\boldsymbol{\beta}},\widehat{\boldsymbol{\gamma}})$
and $\boldsymbol{\nu}-\widetilde{\boldsymbol{\nu}}=U'(\boldsymbol{\beta}-\widetilde{\boldsymbol{\beta}})$.

In order to obtain a sparse estimator of $\boldsymbol{\beta}^\star$, we propose using $\widehat{\boldsymbol{\beta}}(\lambda)$ defined by
\begin{equation}\label{eq:beta_hat}
\widehat{\boldsymbol{\beta}}(\lambda)=\textrm{Argmin}_{\boldsymbol{\beta}}\left\{-\widetilde{L}_Q(\boldsymbol{\beta})+\lambda \|\boldsymbol{\beta}\|_1\right\},
\end{equation}
for a positive $\lambda$, where $\|\boldsymbol{\beta}\|_1=\sum_{k=0}^p |\beta_k|$ and $\widetilde{L}_Q(\boldsymbol{\beta})$ denotes the quadratic approximation of the log-likelihood. 
\textcolor{black}{This quadratic approximation is} defined by
\begin{equation}\label{eq:LQtilde}
-\widetilde{L}_Q(\boldsymbol{\beta})=\frac12\|\mathcal{Y}-\mathcal{X}\boldsymbol{\beta}\|_2^2,
\end{equation}
with
\begin{equation}\label{eq:def_Y_X}
\mathcal{Y}=\Lambda^{1/2}U'\widetilde{\boldsymbol{\beta}}
-\Lambda^{-1/2}U'\left(\frac{\partial L}{\partial \boldsymbol{\beta}}(\widetilde{\boldsymbol{\beta}},\widehat{\boldsymbol{\gamma}})\right)' ,\;  \mathcal{X}=\Lambda^{1/2}U'
\end{equation}
and $\|\cdot\|_2$ denoting the $\ell_2$ norm in $\mathbb{R}^{p+1}$.
\textcolor{black}{Computational details for obtaining the expression \eqref{eq:LQtilde} of $\widetilde{L}_Q(\boldsymbol{\beta})$ appearing in
Criterion (\ref{eq:beta_hat}) are provided in Section \ref{sub:var_sec}.}

\textcolor{black}{The parameter $\lambda$ is tuned thanks to the stability
selection approach devised by \cite{meinshausen:buhlmann:2010}. For further details, we refer the reader to Section \ref{sec:num}.
}

\subsection{\textcolor{black}{Practical implementation}}\label{sec:practical}
We summarize hereafter the different steps of our methodology.
\begin{itemize}
\item\textsf{\underline{First step:} Initialization.} We take for $\boldsymbol{\beta}^{(0)}$ 
the estimator of $\boldsymbol{\beta}^\star$ obtained by fitting a GLM to the observations
$Y_1,\dots,Y_n$ thus ignoring the ARMA part of the model. For $\boldsymbol{\gamma}^{(0)}$, we take the null vector.
\item\textsf{\underline{Second step:} Newton-Raphson algorithm.} We use the recursion defined in (\ref{eq:newton_raphson}) with
the initialization $\boldsymbol{\delta}^{(0)}=(\boldsymbol{\beta}^{(0)},\boldsymbol{\gamma}^{(0)})$ obtained in the first step and
we stop at the iteration $R$ such that $\|\boldsymbol{\delta}^{(R)}-\boldsymbol{\delta}^{(R-1)}\|_\infty<10^{-6}$.
\item\textsf{\underline{Third step:} Variable selection.} To obtain a sparse estimator of $\boldsymbol{\beta}^\star$, we use the criterion (\ref{eq:beta_hat})
where $\widetilde{\beta}$ and $\widehat{\boldsymbol{\gamma}}$ appearing in (\ref{eq:def_Y_X}) are replaced by $\boldsymbol{\beta}^{(0)}$ and $\boldsymbol{\gamma}^{(R)}$ 
obtained in the first and second steps, respectively. This step provides
$\widehat{\boldsymbol{\beta}}(\lambda)$ for different values of $\lambda$.
\item\textsf{\underline{Fourth step:} Choice of $\lambda$.} To choose the value of $\lambda$ and thus the final estimator $\widehat{\boldsymbol{\beta}}$ of $\boldsymbol{\beta}^\star$, we use the stability
selection approach devised by \cite{meinshausen:buhlmann:2010}.
\end{itemize}

\subsection{Consistency results}\label{sec:consistency}

In this section, we shall establish \textcolor{black}{in the case where $q=1$} 
the consistency of the parameter $\gamma_1^\star$ from $Y_1,\dots,Y_n$ defined in (\ref{eq:Yt}) and (\ref{eq:Zt})
where (\ref{eq:mut_Wt}) is replaced by
\begin{equation}\label{eq:mut_simple}
\mu_t^\star=\exp(W_t^\star) \textrm{ with } W_t^\star=\beta_0^\star+Z_t^\star.
\end{equation}
\textcolor{black}{Note that some theoretical results have already been obtained in this framework (no covariates and $q=1$)
by \cite{davis:dunsmuir:streett:2003} and \cite{davis:dunsmuir:street:2005}. However, here, we provide, on the one hand, a more detailed version
of the proof of these results and on the other hand, a proof of the consistency of $\gamma_1^\star$ based on a stochastic equicontinuity result.
 We limit ourselves to this framework since the more general one is a framework
where the consistency is much more tricky to handle and is beyond the scope of this paper.}

\begin{theo}\label{theo:MA1}
Assume that $Y_1,\dots,Y_n$ satisfy the model defined by (\ref{eq:Yt}), (\ref{eq:mut_simple}) and (\ref{eq:Zt}) with $q=1$ and $\gamma_1^\star\in\Gamma$ where $\Gamma$ is a compact set
of $\mathbb{R}$ which does not contain 0. Assume also that $(W_t^\star)$ 
is started with its stationary invariant distribution. Let $\widehat{\gamma}_1$ be defined by:
$$
\widehat{\gamma}_1=\textrm{Argmax}_{\gamma_1\in\Gamma}\; L(\beta_0^\star,\gamma_1),
$$
where 
\begin{equation}\label{eq:L:beta_0}
L(\beta_0^\star,\gamma_1)=\sum_{t=1}^n\left(Y_t W_t(\beta_0^\star,\gamma_1)-\exp(W_t(\beta_0^\star,\gamma_1)\right),
\end{equation}
with 
\begin{equation}\label{eq:W_Z}
W_t(\beta_0^\star,\gamma_1)=\beta_0^\star+Z_t(\gamma_1)=\beta_0^\star+\gamma_1 E_{t-1}(\gamma_1), 
\end{equation}

$$
E_{t-1}(\gamma_1)=Y_{t-1}\exp(-W_{t-1}(\beta_0^\star,\gamma_1))-1, \textrm{ if } t>1 \textrm{ and } E_{t-1}(\gamma_1)=0, \textrm{ if } t\leq 1.
$$

Then $\widehat{\gamma}_1\stackrel{p}{\longrightarrow}\gamma_1^\star$, as $n$ tends to infinity, where $\stackrel{p}{\longrightarrow}$ denotes the convergence in probability.
\end{theo}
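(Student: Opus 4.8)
The plan is to treat $\widehat\gamma_1$ as an M-estimator and to verify the two hypotheses of the classical consistency (argmax) theorem: uniform convergence in probability of the normalized criterion $Q_n(\gamma_1):=n^{-1}L(\beta_0^\star,\gamma_1)$ to a non-random limit $\ell(\gamma_1)$, and the existence of a well-separated maximum of $\ell$ at $\gamma_1^\star$. Since $\Gamma$ is compact and $\gamma_1\mapsto Q_n(\gamma_1)$ is continuous, these two facts together yield $\widehat\gamma_1\stackrel{p}{\longrightarrow}\gamma_1^\star$.

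First I would fix the probabilistic framework. Because $W_t^\star=\beta_0^\star+\gamma_1^\star E_{t-1}^\star$ with $E_{t-1}^\star$ a function of $(Y_{t-1},W_{t-1}^\star)$, the sequence $(W_t^\star)$ is a Markov chain; started from its invariant distribution (as assumed), the process $(Y_t,W_t^\star)$ is strictly stationary, and I would argue that it is in fact ergodic. A preliminary technical point is to neutralize the arbitrary initialization $E_0(\gamma_1)=0$: I would introduce the stationary solution $\widetilde W_t(\gamma_1)$ of the same recursion run from the infinite past and show that $\sup_{\gamma_1\in\Gamma}|W_t(\beta_0^\star,\gamma_1)-\widetilde W_t(\gamma_1)|\to 0$ geometrically, so that $Q_n$ may be replaced, up to a uniform $o_p(1)$ term, by the criterion built from $\widetilde W_t(\gamma_1)$. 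For each fixed $\gamma_1$, the ergodic theorem then gives $Q_n(\gamma_1)\to\ell(\gamma_1):=\mathbb{E}\!\left[Y_1\widetilde W_1(\gamma_1)-\exp(\widetilde W_1(\gamma_1))\right]$, the integrability being guaranteed by moment bounds on the invariant law.

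To upgrade this pointwise convergence to uniform convergence over $\Gamma$, I would establish the stochastic equicontinuity of the family $\{Q_n\}_n$, as announced in the text. Differentiating the summands with respect to $\gamma_1$ and dominating the derivative by an envelope that is integrable and uniform in $\gamma_1\in\Gamma$ produces an equicontinuity modulus; combined with the pointwise limit and the compactness of $\Gamma$ this delivers $\sup_{\gamma_1\in\Gamma}|Q_n(\gamma_1)-\ell(\gamma_1)|\stackrel{p}{\longrightarrow}0$. For the identification step I would condition: since $\widetilde W_t(\gamma_1)$ is $\mathcal{F}_{t-1}$-measurable and $\mathbb{E}[Y_t\mid\mathcal{F}_{t-1}]=\exp(W_t^\star)$, iterated expectation gives $\ell(\gamma_1)=\mathbb{E}\!\left[\exp(W_t^\star)\,\widetilde W_t(\gamma_1)-\exp(\widetilde W_t(\gamma_1))\right]$, whose integrand is a strictly concave function of the value $\widetilde W_t(\gamma_1)$ attaining its maximum exactly at $W_t^\star=\widetilde W_t(\gamma_1^\star)$. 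Hence $\ell(\gamma_1)\le\ell(\gamma_1^\star)$ with equality if and only if $\widetilde W_t(\gamma_1)=W_t^\star$ almost surely; unwinding the recursion reduces this to $(\gamma_1-\gamma_1^\star)E_{t-1}^\star=0$ a.s., and since the working residual $E_{t-1}^\star$ is non-degenerate this forces $\gamma_1=\gamma_1^\star$, so the maximizer is unique and $\ell$ is continuous, giving separation on the compact $\Gamma$.

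The main obstacle, I expect, is the uniform moment control underlying both the stochastic equicontinuity and the integrability in the ergodic theorem. The filter $\widetilde W_t(\gamma_1)$ and its $\gamma_1$-derivative involve factors of the form $Y_{t-1}\exp(-\widetilde W_{t-1}(\gamma_1))$, which are heavy-tailed and whose recursive definition couples every time index through the single parameter $\gamma_1$; producing an integrable envelope for $\sup_{\gamma_1\in\Gamma}\big|\partial_{\gamma_1}\!\big(Y_t\widetilde W_t(\gamma_1)-\exp(\widetilde W_t(\gamma_1))\big)\big|$, establishing the geometric forgetting of the initialization uniformly in $\gamma_1$, and securing the necessary exponential moments of the invariant distribution are where the genuine difficulty, and the bulk of the work, will reside.
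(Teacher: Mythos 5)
Your plan has the same skeleton as the paper's proof, which proceeds via three propositions feeding a standard argmax argument: pointwise convergence of $n^{-1}L(\beta_0^\star,\gamma_1)$ to a deterministic limit $\mathcal{L}(\gamma_1)$ using stationarity and ergodicity of $(W_t^\star)$ (Proposition 1), identification of $\gamma_1^\star$ as the unique maximizer by conditioning on the past and using the inequality $x-\mathrm{e}^x\le -1$ with equality only at $x=0$ (Proposition 2), and stochastic equicontinuity of $n^{-1}L(\beta_0^\star,\cdot)$ to get uniformity over the compact $\Gamma$ (Proposition 3). Your identification step is the paper's, and in fact slightly more careful: you unwind the a.s.\ equality of the filters to $(\gamma_1-\gamma_1^\star)E_{t-1}^\star=0$ and invoke non-degeneracy of $E_{t-1}^\star$, where the paper simply asserts that $x=0$ forces $\gamma_1=\gamma_1^\star$. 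Your equicontinuity-by-derivative-envelope is a cosmetic variant of the paper's direct Lipschitz-in-$\gamma_1$ bound on differences of the criterion.

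There are, however, two genuine gaps. First, the whole plan rests on ergodicity of $(W_t^\star)$, and you only say you ``would argue'' it; this is the crux, and the paper devotes a concrete mechanism to it: from the recursion $W_t^\star=(\beta_0^\star-\gamma_1^\star)+\gamma_1^\star Y_{t-1}\exp(-W_{t-1}^\star)$ one gets the Markov property, then strong aperiodicity via $\mathbb{P}\left(Y_{t-1}=0\mid W_{t-1}^\star\right)>0$, then Doeblin's condition (Lemma 1, citing Davis--Dunsmuir--Streett), which by Meyn--Tweedie yields uniform ergodicity, hence existence and uniqueness of the invariant law (needed even to make sense of the theorem's hypothesis) and the strict stationarity and ergodicity that every subsequent step consumes. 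Without this minorization argument, nothing downstream runs. Second, your stationary-approximation step --- replacing the $E_0=0$ filter by $\widetilde W_t(\gamma_1)$ run from the infinite past, with geometric forgetting uniform on $\Gamma$ --- is a real departure from the paper, which never introduces $\widetilde W_t$ and instead writes each zero-initialized summand as a measurable function of the stationary coordinates $W_2^\star,\dots,W_{t+1}^\star$ and applies the ergodic theorem to those. Your route is the more standard treatment of the initialization mismatch, but you defer exactly its hardest claim, and it is not merely technical bulk: the random Lipschitz coefficient of the recursion is $|\gamma_1|\,Y_{t-1}\exp(-W_{t-1}(\gamma_1))$, which is unbounded, so uniform geometric forgetting would require a negative log-Lyapunov-exponent-type condition holding over all of $\Gamma$; the theorem imposes no such smallness restriction on the compact $\Gamma$ (only that it avoid $0$), so this step may fail at the stated level of generality. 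In short, your outline reproduces the paper's architecture, but its two load-bearing inputs --- the ergodicity mechanism and the uniform forgetting with integrable envelopes --- are precisely the ones you leave unproven; the paper supplies the first via Doeblin's condition and structures its argument so as not to invoke the second.
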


The proof of Theorem \ref{theo:MA1} is based on the following propositions which are proved in Section \ref{sec:proofs}. These propositions are the classical arguments for establishing consistency
results of maximum likelihood estimators. Note that we shall explain in the proof of Proposition \ref{prop1}
why a stationary invariant distribution for $(W_t^\star)$ does exist. The main tools used for proving Propositions \ref{prop1} and \ref{prop3} are the Markov property and the ergodicity of $(W_t^\star)$.

\begin{prop}\label{prop1}
For all fixed $\gamma_1$, under the assumptions of Theorem \ref{theo:MA1}, 
\begin{equation}\label{eq:conv}
\frac1n L(\beta_0^\star,\gamma_1)\stackrel{p}{\longrightarrow} 
\mathcal{L}(\gamma_1):=\mathbb{E}\left[Y_3 W_3(\beta_0^\star,\gamma_1)-\exp(W_3(\beta_0^\star,\gamma_1)\right], \textrm{ as $n$ tends to infinity.}
\end{equation}
\end{prop}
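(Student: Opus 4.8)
The statement is a law of large numbers for $\frac1n L(\beta_0^\star,\gamma_1)=\frac1n\sum_{t=1}^n s_t$, where $s_t:=Y_t W_t(\beta_0^\star,\gamma_1)-\exp(W_t(\beta_0^\star,\gamma_1))$ is the generic summand of \eqref{eq:L:beta_0}. The plan is to realize $(s_t)$ as a functional of a stationary ergodic Markov chain and then invoke the ergodic theorem. First I would record the Markov structure. From \eqref{eq:W_Z} and the definition of $E_{t-1}(\gamma_1)$, together with the true process built from \eqref{eq:mut_simple} and \eqref{eq:Zt}, the true log-intensity satisfies $W_t^\star=\beta_0^\star+\gamma_1^\star\bigl(Y_{t-1}e^{-W_{t-1}^\star}-1\bigr)$; since, given $W_{t-1}^\star$, the variable $Y_{t-1}\sim\mathcal P(e^{W_{t-1}^\star})$ is drawn and $W_t^\star$ is a deterministic function of $(W_{t-1}^\star,Y_{t-1})$, the sequence $(W_t^\star)$ is a Markov chain. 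I would then prove existence and uniqueness of an invariant distribution via a Foster--Lyapunov drift argument: when $W_{t-1}^\star$ is very negative the conditional law of $Y_{t-1}$ concentrates at $0$, forcing $E_{t-1}^\star=-1$ and $W_t^\star=\beta_0^\star-\gamma_1^\star$, which drives the chain back towards a compact region and prevents escape to $-\infty$; combined with irreducibility and the Feller property this yields positive Harris recurrence, hence a unique invariant law and ergodicity. As $(W_t^\star)$ is assumed started at this invariant distribution, it is stationary and ergodic, which is exactly the point flagged in the remark preceding the proposition.

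Next I would lift ergodicity to the summand. Since $\gamma_1^\star\neq 0$ (because $\Gamma\not\ni 0$), inverting the recursion gives $Y_{t-1}=e^{W_{t-1}^\star}\bigl(1+(W_t^\star-\beta_0^\star)/\gamma_1^\star\bigr)$, so $Y_t$ is a measurable function of $(W_t^\star,W_{t+1}^\star)$ and $(Y_t)$ is itself stationary and ergodic. The evaluated process $W_t(\beta_0^\star,\gamma_1)$ obeys the deterministic recursion $W_t=\beta_0^\star+\gamma_1(Y_{t-1}e^{-W_{t-1}}-1)$ driven by $(Y_{t-1})$, so the pair $\Xi_t:=(W_t^\star,W_t(\beta_0^\star,\gamma_1))$ is again a Markov chain. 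Writing $s_t=\mathbb E[s_t\mid\mathcal F_{t-1}]+(Y_t-e^{W_t^\star})W_t(\beta_0^\star,\gamma_1)$, the second term is a martingale-difference sequence (its conditional mean vanishes since $\mathbb E[Y_t\mid\mathcal F_{t-1}]=e^{W_t^\star}$ and $W_t(\beta_0^\star,\gamma_1)$ is $\mathcal F_{t-1}$-measurable), whose Cesàro average tends to $0$ by the martingale law of large numbers, while $\mathbb E[s_t\mid\mathcal F_{t-1}]=e^{W_t^\star}W_t(\beta_0^\star,\gamma_1)-e^{W_t(\beta_0^\star,\gamma_1)}$ is a functional of $\Xi_t$. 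Applying the ergodic theorem to the stationary version of $(\Xi_t)$ then delivers the limit $\mathcal L(\gamma_1)$, the expectation of the generic summand in the stationary regime (the index $3$ in the statement being a representative interior time point); the passage from this stationary version to the finite-sample process is the forgetting step discussed below. Naming the limit also requires integrability, which I would check through the Poisson moment generating function and the compactness of $\Gamma$ to control $\mathbb E[e^{W(\beta_0^\star,\gamma_1)}]$ and $\mathbb E[|Y\,W(\beta_0^\star,\gamma_1)|]$.

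The main obstacle is twofold, and it is what confines the result to $q=1$. On the probabilistic side, establishing the drift condition and the uniform integrability of the summand is delicate because the conditional variance of $E_{t-1}^\star$ equals $e^{-W_{t-1}^\star}$, which blows up on the event that $W^\star$ is very negative; one must exploit the compensating collapse of $Y$ to $0$ on that same event. On the analytic side, the evaluated recursion is initialized deterministically ($E_0(\gamma_1)=0$ from \eqref{eq:W_Z}), so $(\Xi_t)$ does not start at its invariant law; I would therefore need a forgetting-of-the-initial-condition estimate showing that the influence of this initialization decays, so that the finite-sample summands can be coupled to their stationary counterparts and the Cesàro difference is negligible. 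Proving this geometric forgetting is the crux, since the map $w\mapsto\beta_0^\star+\gamma_1(Y_{t-1}e^{-w}-1)$ is not a uniform contraction, and it is precisely this step that becomes intractable once $q>1$, which is why the more general framework is left aside.
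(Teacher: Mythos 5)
Your overall skeleton --- realize $(W_t^\star)$ as a stationary ergodic Markov chain and deduce a law of large numbers for the summands $s_t:=Y_tW_t(\beta_0^\star,\gamma_1)-\exp(W_t(\beta_0^\star,\gamma_1))$ of \eqref{eq:L:beta_0} --- matches the paper's, and your first half is essentially a variant of its Lemma \ref{lem:aperiodic_doeblin}: where you propose a Foster--Lyapunov drift argument giving positive Harris recurrence, the paper checks aperiodicity and Doeblin's condition (deferring the latter to the proof of Proposition 2 in \cite{davis:dunsmuir:streett:2003}), deduces uniform ergodicity from Theorem 16.0.2 of \cite{meyn:tweedie}, and then strict stationarity and ergodicity from \cite{stout:1974}. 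Your second half genuinely diverges: you split $s_t$ into a martingale difference $(Y_t-e^{W_t^\star})W_t(\beta_0^\star,\gamma_1)$ plus a functional of the bivariate chain $(W_t^\star,W_t(\beta_0^\star,\gamma_1))$, whereas the paper forms no pair chain and no martingale; it observes (via the same inversion you use, valid because $\gamma_1^\star\neq 0$, recovering $Y_t$ from consecutive values of $W^\star$) that the actual summand, deterministic initialization included, is a measurable function of $W_2^\star,\dots,W_{t+1}^\star$, and then applies Theorems 1.3.3 and 1.3.5 of \cite{taniguchibook:2012} on functionals of stationary ergodic processes to conclude in one stroke.

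The genuine gap is the step you yourself call the crux: the forgetting-of-the-initial-condition estimate. Your ergodic theorem is applied to the stationary version of the pair chain, but the process appearing in \eqref{eq:L:beta_0} starts from $E_0(\gamma_1)=0$, so without a proof that the Ces\`aro difference between the initialized and stationary versions vanishes, your argument does not close --- and you explicitly leave that proof open, noting (correctly) that the map $w\mapsto\beta_0^\star+\gamma_1(Y_{t-1}e^{-w}-1)$ is not a uniform contraction. Two subsidiary omissions compound this: the martingale law of large numbers for $\frac1n\sum_t(Y_t-e^{W_t^\star})W_t(\beta_0^\star,\gamma_1)$ requires control of the conditional variances $e^{W_t^\star}W_t(\beta_0^\star,\gamma_1)^2$, which you assert rather than verify, and the drift condition itself is only sketched (the paper, by contrast, disposes of the corresponding step by citation). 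Note also that your limit is the stationary expectation of the generic summand, while $\mathcal{L}(\gamma_1)$ in \eqref{eq:conv} is literally the expectation of the $t=3$ summand computed with the deterministic initialization; identifying the two is again the missing forgetting step. In fairness, the subtlety you isolate is real: the paper applies Taniguchi's theorems to summands that are functions of a number of coordinates growing with $t$, so it implicitly glosses the very issue you make explicit; but the paper's route presents a complete chain of arguments and citations, whereas your proposal, by your own account, lacks its central estimate.
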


\begin{prop}\label{prop2}
The function $\mathcal{L}$ defined in (\ref{eq:conv}) has a unique maximum at the true parameter $\gamma_1=\gamma_1^\star$.
\end{prop}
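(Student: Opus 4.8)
The plan is to run the classical information-inequality argument for maximum likelihood, exploiting the conditional Poisson structure of the model together with the strict concavity of the one-dimensional map $b\mapsto \exp(a)\,b-\exp(b)$.

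First I would condition on the past $\mathcal{F}_2=\sigma(Y_s,\,s\le 2)$. For every fixed $\gamma_1$, the predictor $W_3(\beta_0^\star,\gamma_1)$ of \eqref{eq:W_Z} is $\mathcal{F}_2$-measurable, and by \eqref{eq:Yt} and \eqref{eq:mut_simple} the conditional mean under the true parameter is $\mathbb{E}[Y_3\mid\mathcal{F}_2]=\exp(W_3^\star)=\exp(W_3(\beta_0^\star,\gamma_1^\star))$. The tower property then gives
\begin{equation*}
\mathcal{L}(\gamma_1)=\mathbb{E}\Big[\exp\big(W_3(\beta_0^\star,\gamma_1^\star)\big)\,W_3(\beta_0^\star,\gamma_1)-\exp\big(W_3(\beta_0^\star,\gamma_1)\big)\Big].
\end{equation*}
Writing $a=W_3(\beta_0^\star,\gamma_1^\star)$ and $b=W_3(\beta_0^\star,\gamma_1)$, the integrand is $\phi_a(b):=\exp(a)\,b-\exp(b)$, which is strictly concave since $\phi_a''(b)=-\exp(b)<0$ and satisfies $\phi_a'(b)=\exp(a)-\exp(b)$, so it attains its unique maximum at $b=a$. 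At this stage I would also record that these expectations are finite, which follows from the moment bounds on the stationary distribution of $(W_t^\star)$ that are used in the proof of Proposition \ref{prop1}.

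Second, the pointwise bound $\phi_a(b)\le\phi_a(a)$ yields, after taking expectations, $\mathcal{L}(\gamma_1)\le \mathcal{L}(\gamma_1^\star)$, so $\gamma_1^\star$ is a maximizer. Since $\phi_a(a)-\phi_a(b)\ge 0$ almost surely, the equality $\mathcal{L}(\gamma_1)=\mathcal{L}(\gamma_1^\star)$ can hold only if this nonnegative random variable has zero expectation, i.e. only if $W_3(\beta_0^\star,\gamma_1)=W_3(\beta_0^\star,\gamma_1^\star)$ almost surely, by strict concavity of $\phi_a$.

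Third — and this is the step I expect to be the crux — I would prove the identifiability statement that $W_3(\beta_0^\star,\gamma_1)=W_3(\beta_0^\star,\gamma_1^\star)$ a.s. forces $\gamma_1=\gamma_1^\star$. Using \eqref{eq:W_Z} and \eqref{eq:Et}, this a.s. identity reads $\gamma_1 E_2(\gamma_1)=\gamma_1^\star E_2(\gamma_1^\star)$, that is,
\begin{equation*}
\big[\gamma_1\exp(-W_2(\beta_0^\star,\gamma_1))-\gamma_1^\star\exp(-W_2(\beta_0^\star,\gamma_1^\star))\big]Y_2=\gamma_1-\gamma_1^\star .
\end{equation*}
Conditioning now on $\mathcal{F}_1$, the bracketed coefficient and the right-hand side are $\mathcal{F}_1$-measurable, whereas $Y_2\mid\mathcal{F}_1$ is Poisson, hence non-degenerate and takes at least two distinct values with positive probability. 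An affine-in-$Y_2$ identity that holds almost surely therefore forces both the coefficient of $Y_2$ and the constant term to vanish; in particular the constant term $\gamma_1-\gamma_1^\star$ must be zero. This gives $\gamma_1=\gamma_1^\star$, and hence $\gamma_1^\star$ is the unique maximizer of $\mathcal{L}$, as claimed. The main obstacle is thus not the concavity argument but this final identifiability step, where the non-degeneracy of the conditional Poisson law of $Y_2$ is exactly what pins down the parameter.
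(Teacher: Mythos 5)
Your proof is correct and its core is the same as the paper's: condition on $\mathcal{F}_2$ so that $\mathbb{E}[Y_3\mid\mathcal{F}_2]=\exp(W_3^\star)$, then apply a pointwise inequality to the integrand. Your strict concavity of $\phi_a(b)=\exp(a)\,b-\exp(b)$, maximized uniquely at $b=a$, is exactly equivalent to the paper's elementary inequality $x-\exp(x)\le -1$ (equality iff $x=0$) applied with $x=W_3(\beta_0^\star,\gamma_1)-W_3^\star$. Where you genuinely add something is your third step. The paper's proof ends with the bare assertion that $x=0$ ``means that $\gamma_1=\gamma_1^\star$'', i.e.\ it leaves the identifiability implication ($W_3(\beta_0^\star,\gamma_1)=W_3^\star$ a.s.\ implies $\gamma_1=\gamma_1^\star$) unproved. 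You supply the missing argument, and it is the right one: since $W_2(\beta_0^\star,\gamma_1)=\beta_0^\star+\gamma_1(Y_1\mathrm{e}^{-\beta_0^\star}-1)$ is $\mathcal{F}_1$-measurable, the a.s.\ identity is affine in $Y_2$ with $\mathcal{F}_1$-measurable coefficient and constant term, and the conditional law of $Y_2$ given $\mathcal{F}_1$ is a non-degenerate Poisson distribution (so its support contains at least two points), forcing both the coefficient and the constant term $\gamma_1-\gamma_1^\star$ to vanish. So your proposal is not only consistent with the paper's proof but strictly more complete at the crux. The only loose end on your side is the passing appeal to ``moment bounds'' for finiteness of the expectations, which the paper does not establish in this proposition either (integrability there is implicitly subsumed in the hypotheses of the ergodic theorems invoked for Proposition \ref{prop1}).
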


\begin{prop}\label{prop3}
Under the assumptions of Theorem \ref{theo:MA1}
$$\sup_{\gamma_1\in\Gamma}\left|\frac{L(\beta_0^\star,\gamma_1)}{n}-\mathcal{L}(\gamma_1)\right|\stackrel{p}{\longrightarrow}0, \textrm{ as $n$ tends to infinity,}$$
where $\mathcal{L}(\gamma_1)$ is defined in (\ref{eq:conv}).
\end{prop}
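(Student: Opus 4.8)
The plan is to upgrade the pointwise convergence established in Proposition~\ref{prop1} to uniform convergence over the compact set $\Gamma$ by combining a finite-covering argument with a stochastic equicontinuity bound; this is the standard route to a uniform law of large numbers on a compact parameter space. Writing $g_t(\gamma_1)=Y_t W_t(\beta_0^\star,\gamma_1)-\exp(W_t(\beta_0^\star,\gamma_1))$, we have $L(\beta_0^\star,\gamma_1)/n=n^{-1}\sum_{t=1}^n g_t(\gamma_1)$, and by stationarity $\mathcal{L}(\gamma_1)=\mathbb{E}[g_3(\gamma_1)]$. First I would check that $\mathcal{L}$ is continuous on $\Gamma$, which follows by dominated convergence as soon as an integrable envelope for $g_3(\cdot)$ over $\Gamma$ is available.

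The heart of the argument is to control the oscillation of the empirical criterion in $\gamma_1$. Since $g_t$ is smooth in $\gamma_1$, the mean value theorem gives
\begin{equation*}
\sup_{|\gamma_1-\gamma_1'|\le\delta}\left|\frac{1}{n}\sum_{t=1}^n g_t(\gamma_1)-\frac{1}{n}\sum_{t=1}^n g_t(\gamma_1')\right|\le \delta\cdot \frac{1}{n}\sum_{t=1}^n \sup_{\gamma_1\in\Gamma}\left|\frac{\partial g_t}{\partial \gamma_1}(\gamma_1)\right|,
\end{equation*}
where $\partial g_t/\partial\gamma_1=(Y_t-\exp(W_t(\beta_0^\star,\gamma_1)))\,\partial W_t/\partial\gamma_1$ and $\partial W_t/\partial\gamma_1$ comes from differentiating the recursion defining $E_{t-1}(\gamma_1)$. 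I would then show that $M_t:=\sup_{\gamma_1\in\Gamma}|\partial g_t/\partial\gamma_1|$ admits an integrable, strictly stationary envelope; since $(W_t^\star)$ is ergodic (the same property invoked for Proposition~\ref{prop1}), the ergodic theorem yields $n^{-1}\sum_{t=1}^n M_t\stackrel{p}{\longrightarrow}\mathbb{E}[M_3]<\infty$, so the empirical Lipschitz constant is bounded in probability. This is precisely the stochastic equicontinuity needed.

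To conclude, fix $\varepsilon>0$ and choose $\delta>0$ so that $\delta\,\mathbb{E}[M_3]$ is small and, by continuity of $\mathcal{L}$, so that $|\mathcal{L}(\gamma_1)-\mathcal{L}(\gamma_1')|$ is small whenever $|\gamma_1-\gamma_1'|\le\delta$. Cover the compact set $\Gamma$ by finitely many balls $B(\gamma_1^{(k)},\delta)$, $k=1,\dots,N$, centered at points $\gamma_1^{(k)}$. On each ball the oscillation of both $L(\beta_0^\star,\cdot)/n$ and $\mathcal{L}$ is controlled by the two estimates above, while at the finitely many centers Proposition~\ref{prop1} gives $|L(\beta_0^\star,\gamma_1^{(k)})/n-\mathcal{L}(\gamma_1^{(k)})|\stackrel{p}{\longrightarrow}0$. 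Decomposing $L(\beta_0^\star,\gamma_1)/n-\mathcal{L}(\gamma_1)$ through the nearest center and taking the maximum over the finite cover then yields $\sup_{\gamma_1\in\Gamma}|L(\beta_0^\star,\gamma_1)/n-\mathcal{L}(\gamma_1)|\stackrel{p}{\longrightarrow}0$.

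The hard part will be the moment control behind the integrable envelope $M_t$. Because $E_{t-1}(\gamma_1)=Y_{t-1}\exp(-W_{t-1}(\beta_0^\star,\gamma_1))-1$ is defined recursively and couples the Poisson observations with the exponential link and its inverse, one must bound $\sup_{\gamma_1\in\Gamma}|\partial W_t/\partial\gamma_1|$ and the factor $|Y_t-\exp(W_t(\beta_0^\star,\gamma_1))|$ uniformly in $\gamma_1$ and then show that their product has a finite stationary mean. The compactness of $\Gamma$, together with the assumption that $(W_t^\star)$ is started from its stationary invariant distribution, is what keeps the recursion and the attendant exponential moments of $Y_t$ finite, and verifying these integrability bounds uniformly over $\Gamma$ is the main technical obstacle.
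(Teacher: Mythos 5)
Your proposal follows essentially the same route as the paper: both reduce the uniform convergence to stochastic equicontinuity of $n^{-1}L(\beta_0^\star,\cdot)$ over the compact set $\Gamma$ combined with the pointwise convergence of Proposition~\ref{prop1}, both derive a Lipschitz-in-$\gamma_1$ bound whose coefficient is an empirical average of measurable functions of the past observations, and both control that average by the ergodic theorem applied to the strictly stationary ergodic process $(Y_t)$. The only real difference is cosmetic: you obtain the Lipschitz bound via the mean value theorem and the derivative $\partial W_t/\partial\gamma_1$, whereas the paper bounds the increments $\left|W_t(\beta_0^\star,\gamma_1)-W_t(\beta_0^\star,\gamma_2)\right|$ directly through the recursion using the elementary inequality $|\textrm{e}^x-\textrm{e}^y|\le \textrm{e}^x|x-y|\textrm{e}^{|x-y|}$; in both arguments the integrability of the resulting envelope is the step left essentially unverified (you flag it as the main obstacle, while the paper simply asserts it).
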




\section{Numerical experiments}\label{sec:num}

\subsection{Statistical performance}

The goal of this section is to investigate the performance of our method both from a statistical and a numerical point of view.

\subsubsection{Estimation of the ARMA part coefficients when $p=0$}

In this section, we investigate the statistical performance of our methodology in the case where $Y_1,\dots,Y_n$ satisfy the model
defined by (\ref{eq:Yt}), (\ref{eq:mut_Wt}) and (\ref{eq:Zt}) for $n$ in $\{50,100,250,500,1000\}$
in the case where $p=0$, namely when there are no covariates and for $q$ in
$\{1,2,3\}$. The results are displayed in Figures \ref{fig:estim_beta}, \ref{fig:estim:gam1} and \ref{fig:estim:gam2_3}. We can see from these figures that
the accuracy of the parameters estimations is improved when $n$ increases.

\begin{figure}[!htbp]
  \includegraphics[scale=0.28]{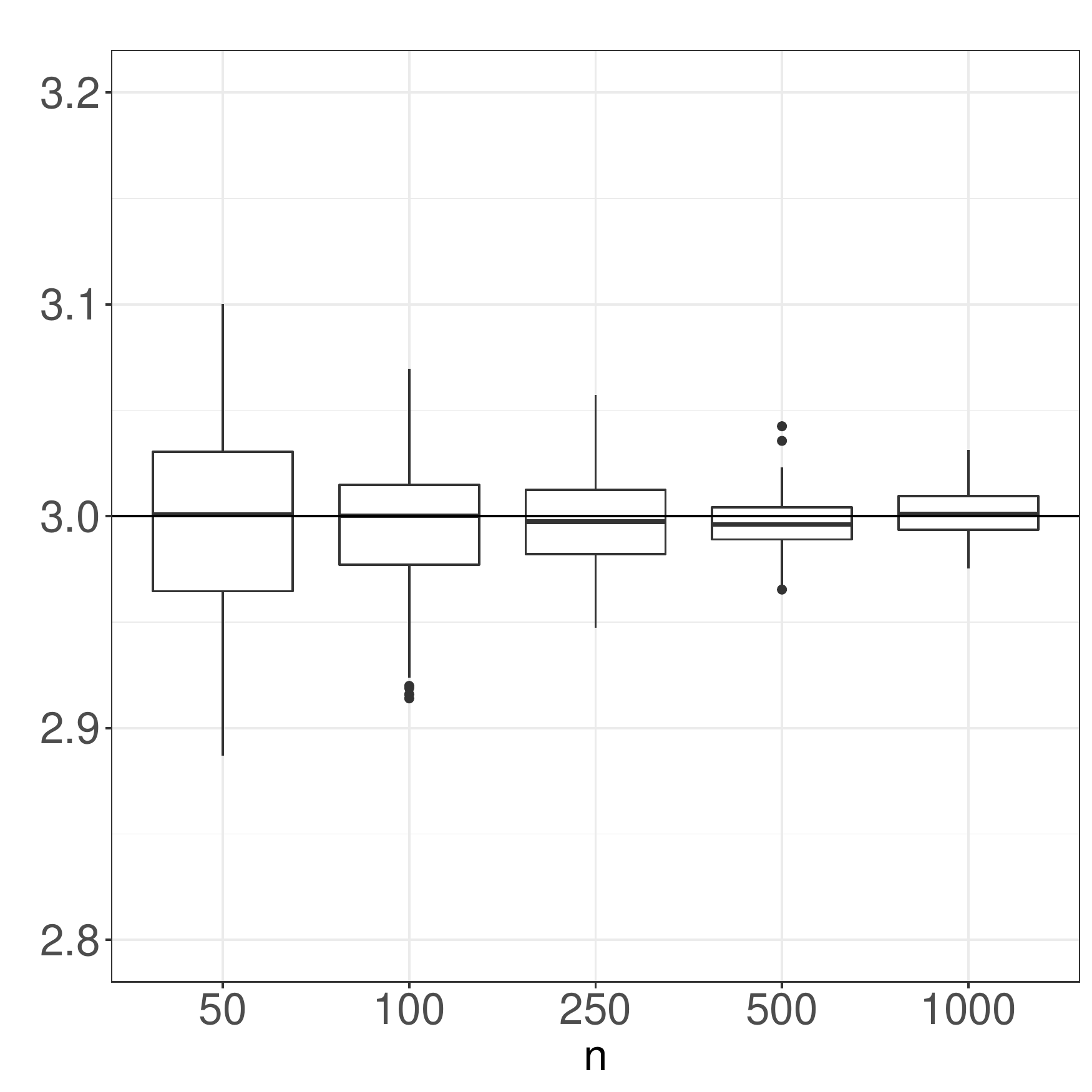}
  \includegraphics[scale=0.28]{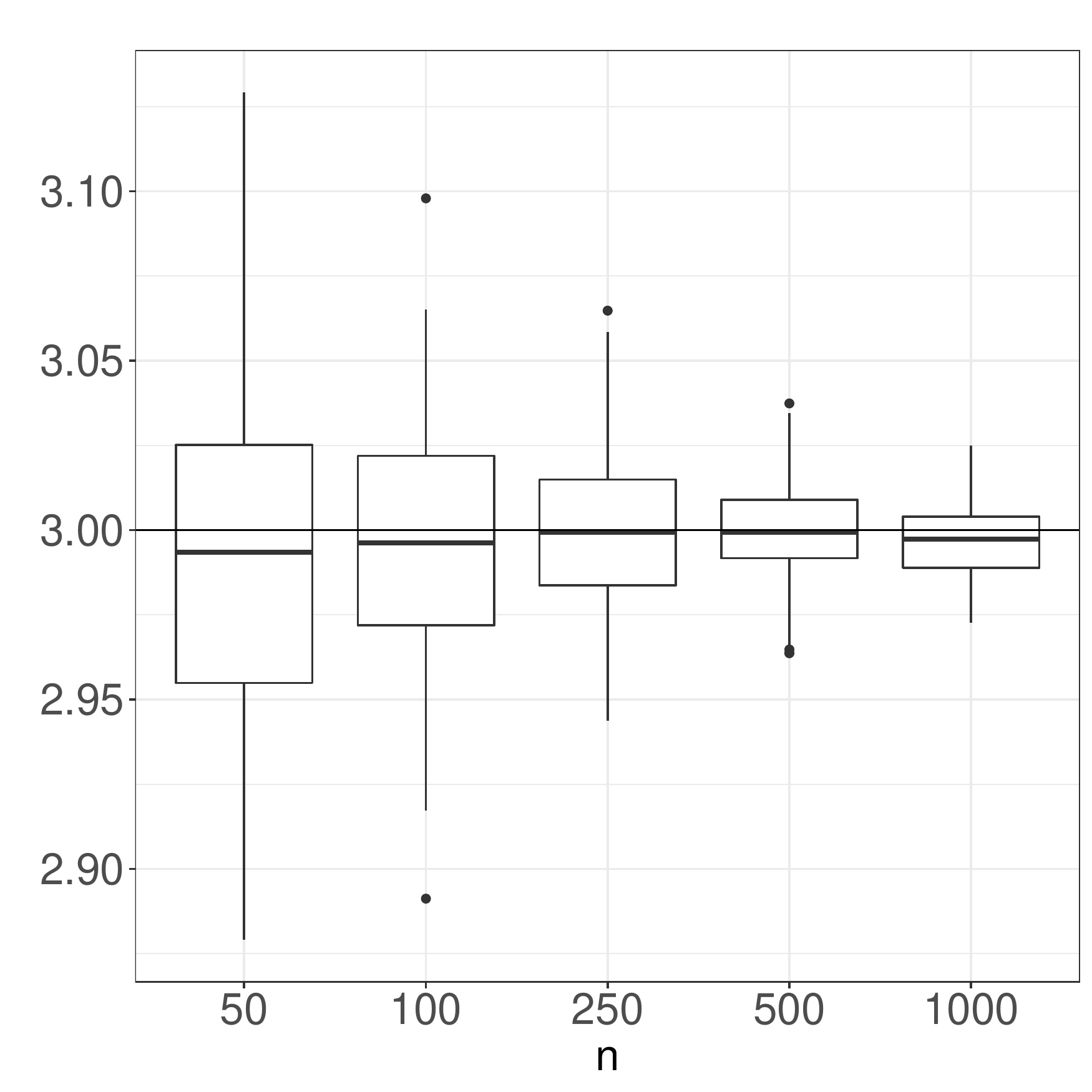}
  \includegraphics[scale=0.28]{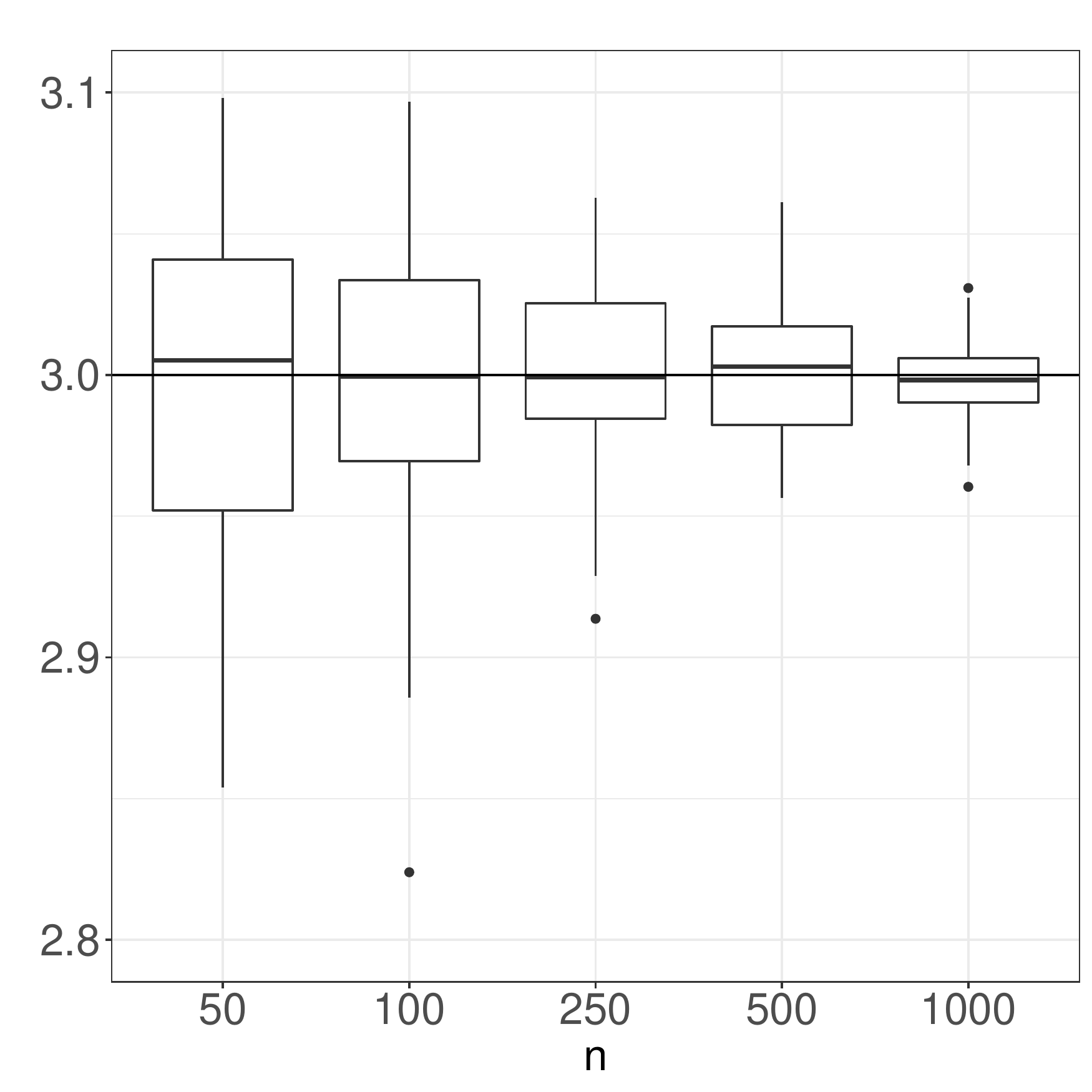}
  \caption{Boxplots for the estimations of $\beta_0^\star=3$ in Model (\ref{eq:mut_Wt}) with no regressor and $q=1$ (left), $q=2$ (middle) and $q=3$ (right). 
The horizontal lines correspond to the value of $\beta_0^\star$.\label{fig:estim_beta}}
\end{figure}

\begin{figure}[!htbp]
  \includegraphics[scale=0.28]{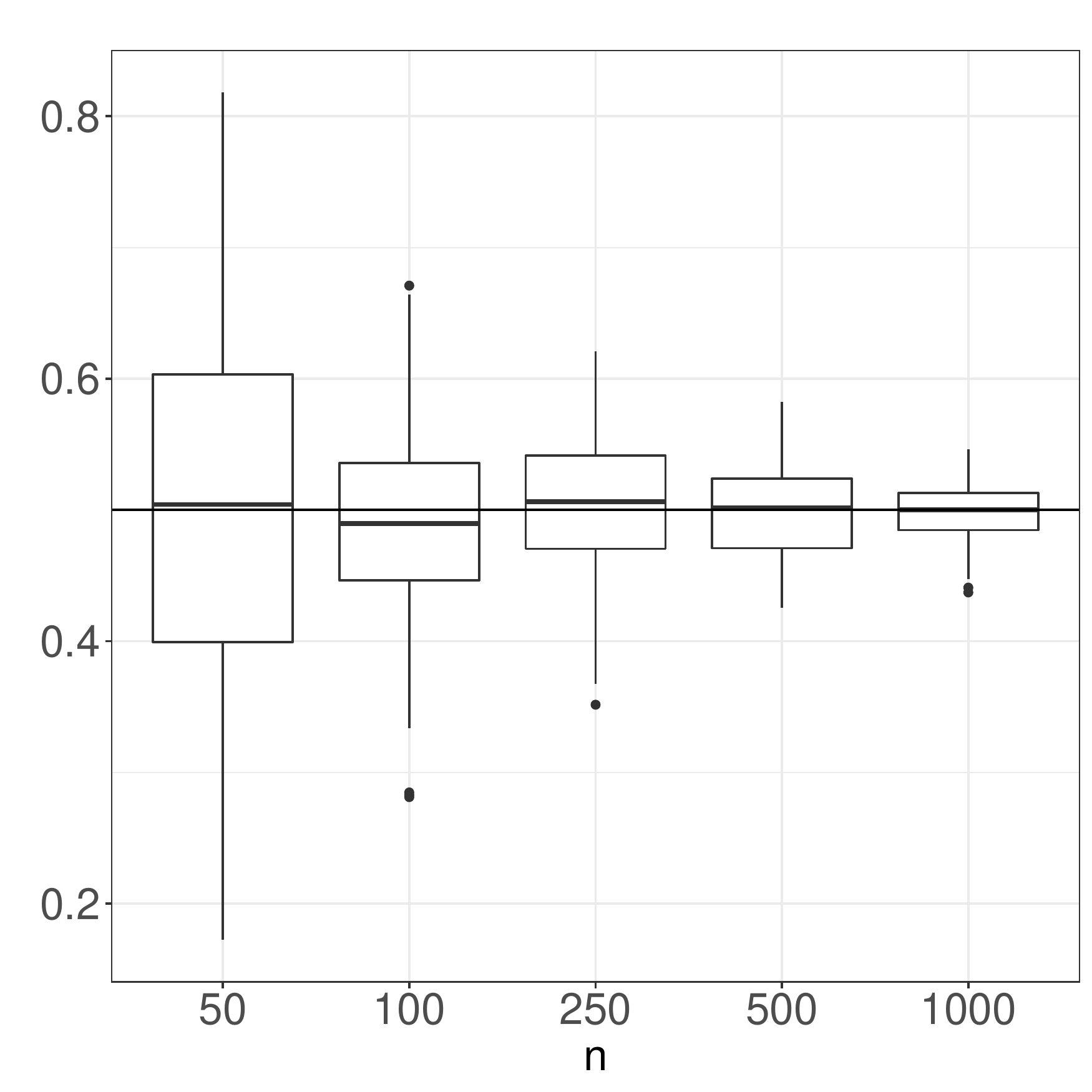}
  \includegraphics[scale=0.28]{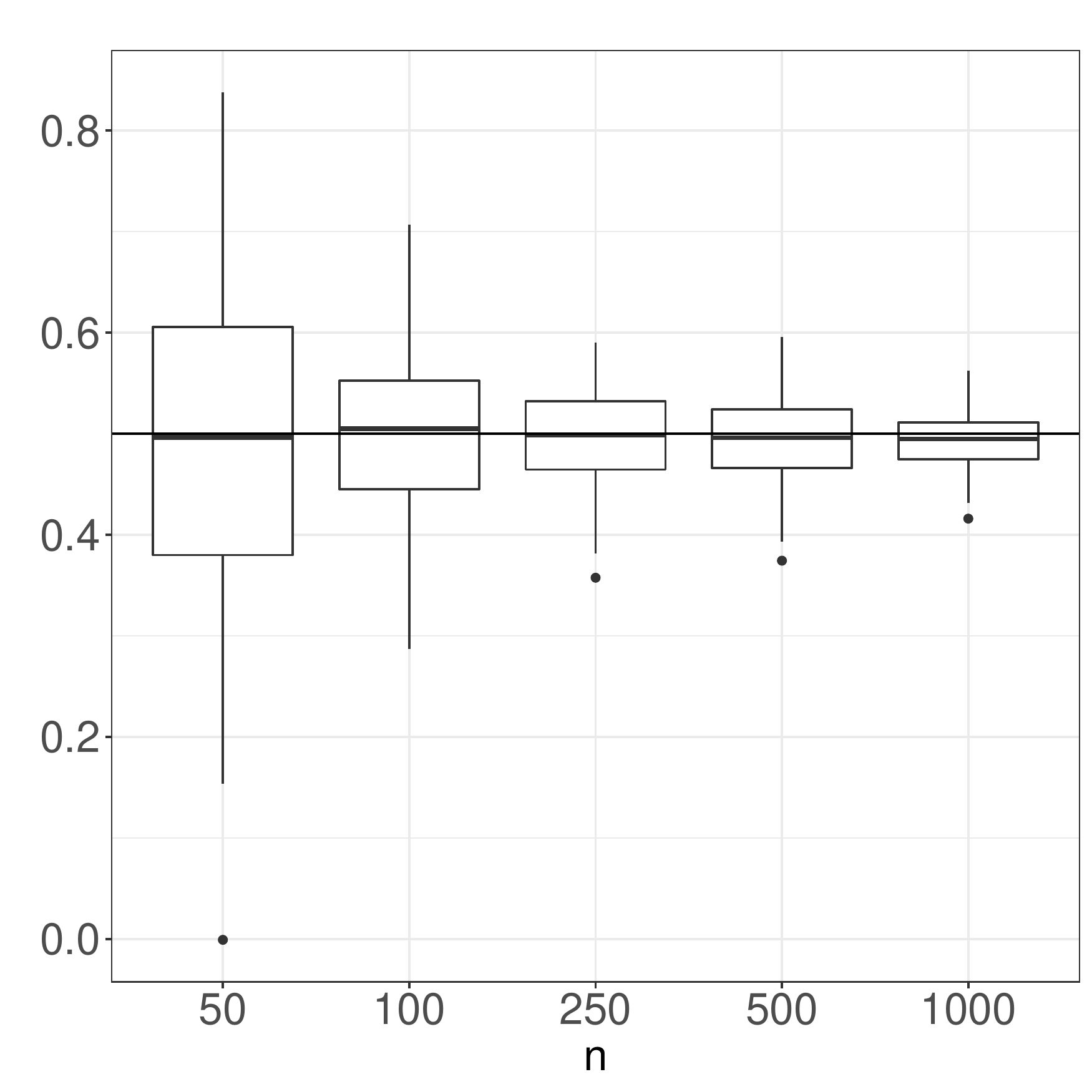}
  \includegraphics[scale=0.28]{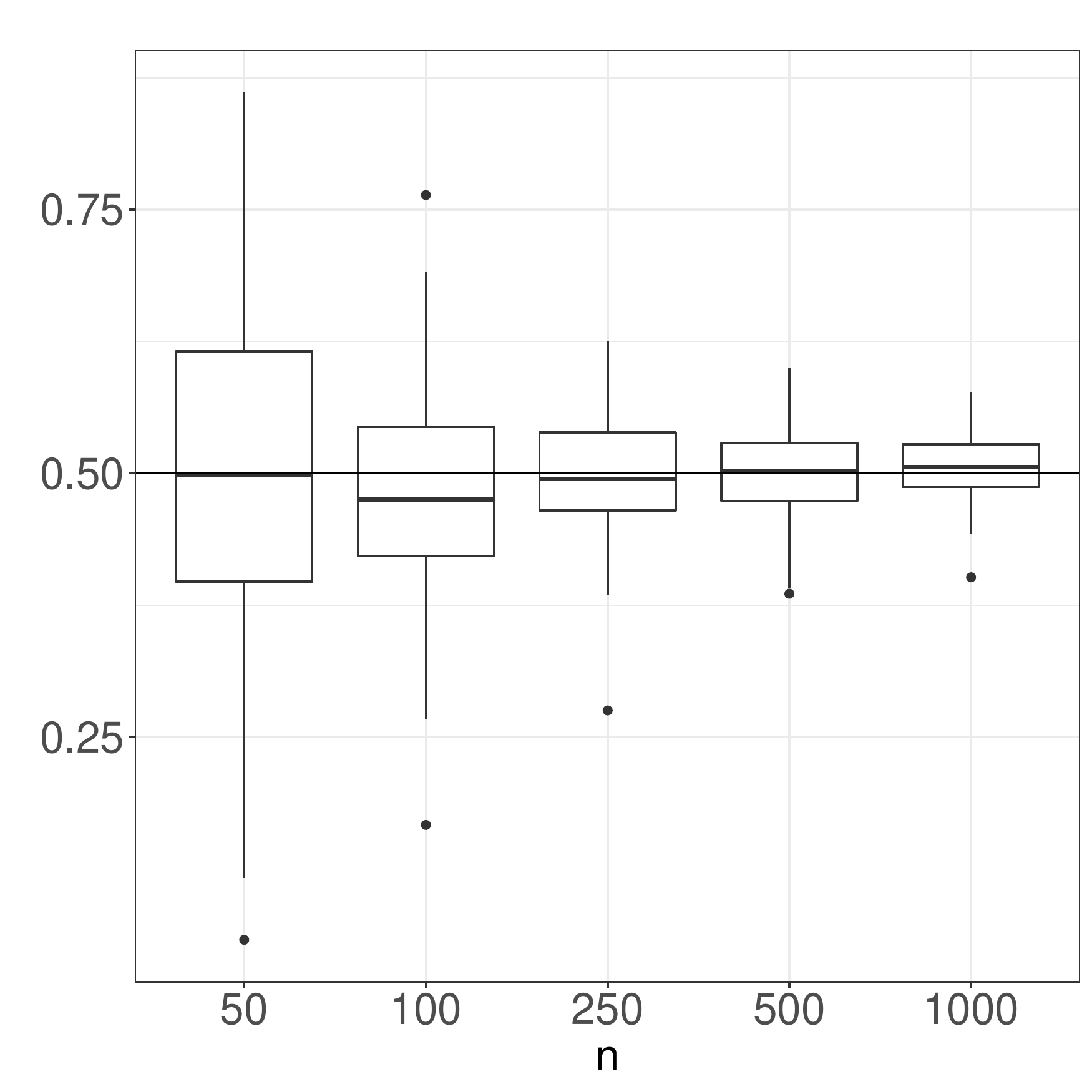}
  \caption{Boxplots for the estimations of $\gamma_1^\star=0.5$ in Model (\ref{eq:mut_Wt}) with no regressor and $q=1$ (left), $q=2$ (middle) and $q=3$ (right).
  \textcolor{black}{The horizontal lines correspond to the value of $\gamma_1^\star$.} \label{fig:estim:gam1}}
\end{figure}

  \begin{figure}[!htbp]
  \includegraphics[scale=0.28]{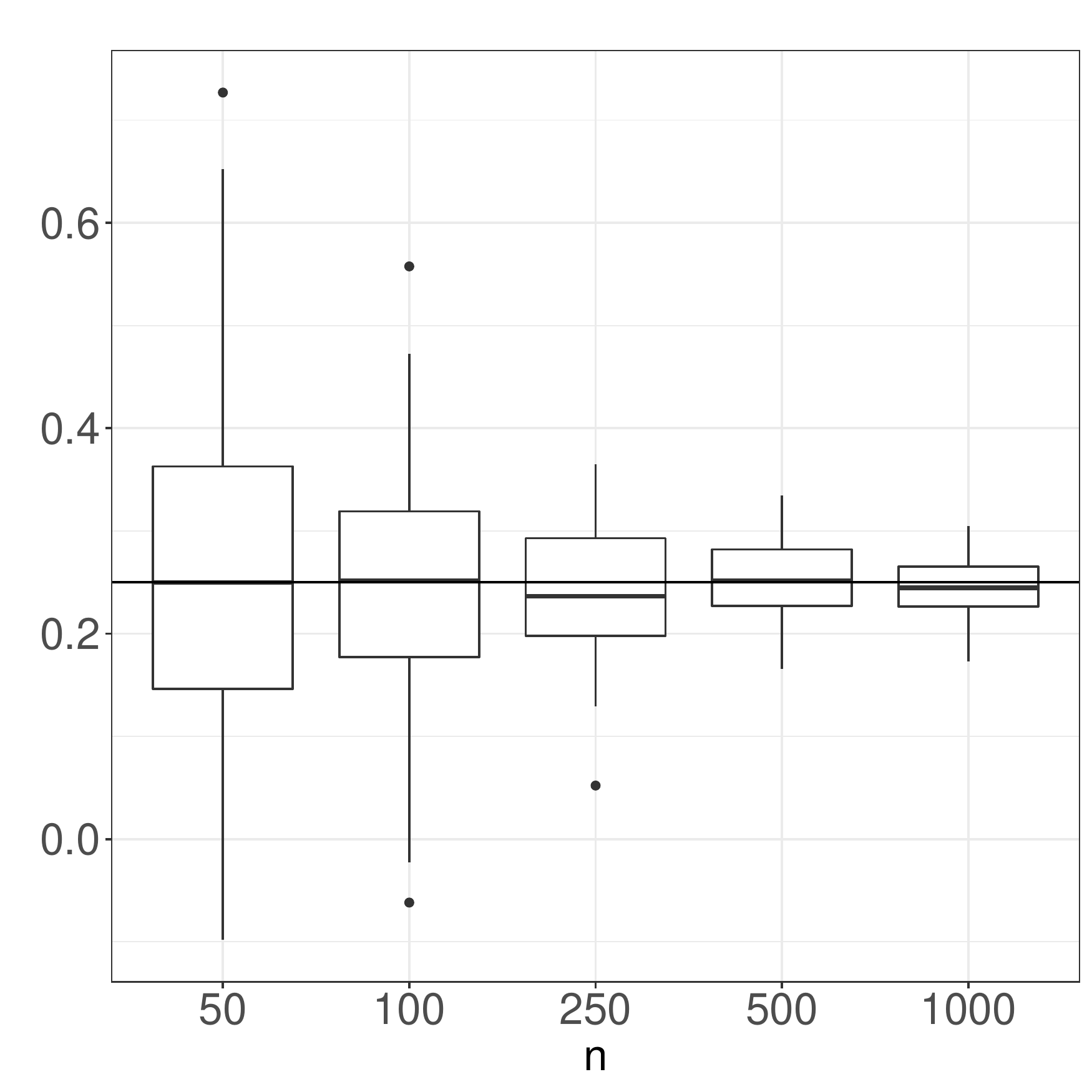}
  \includegraphics[scale=0.28]{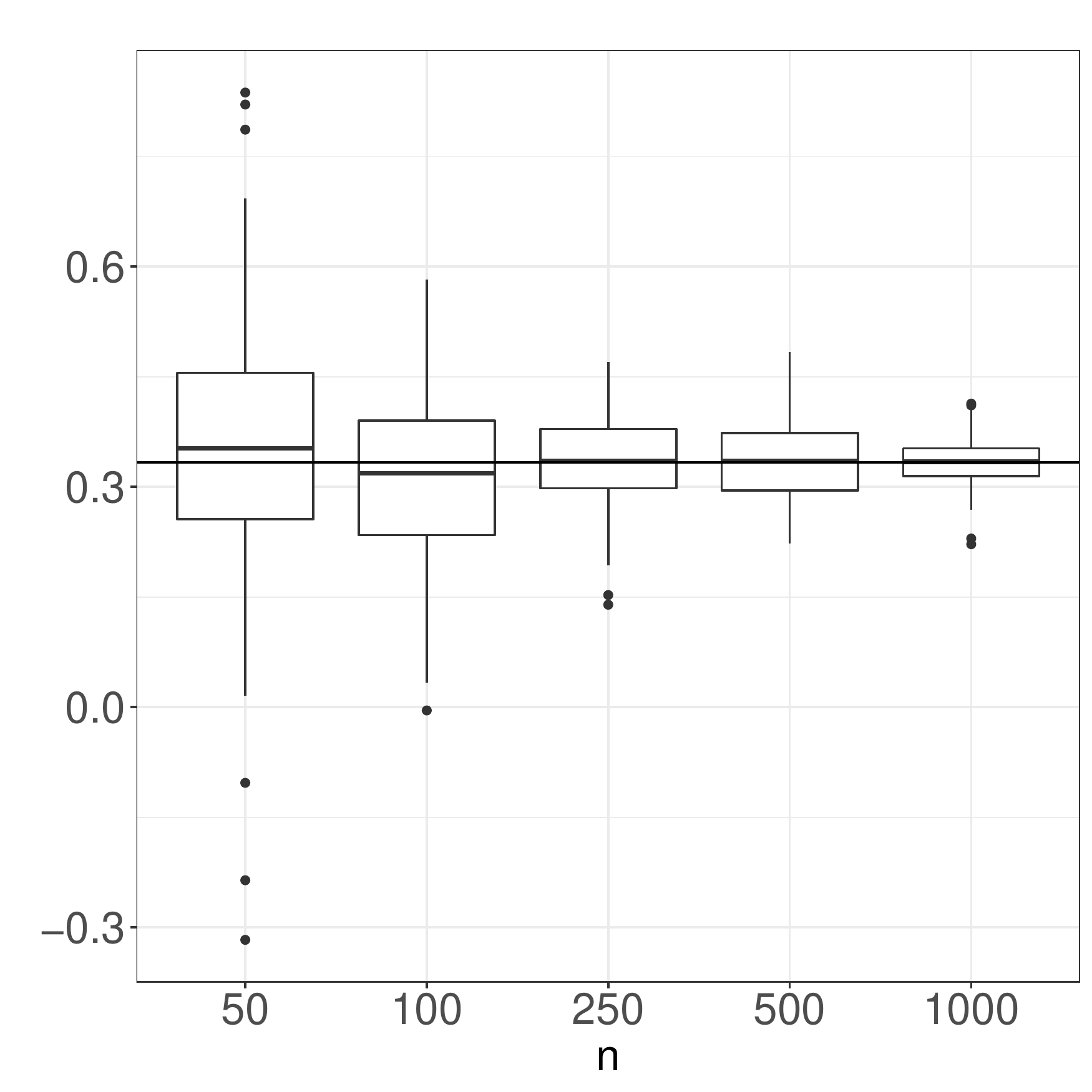}
  \includegraphics[scale=0.28]{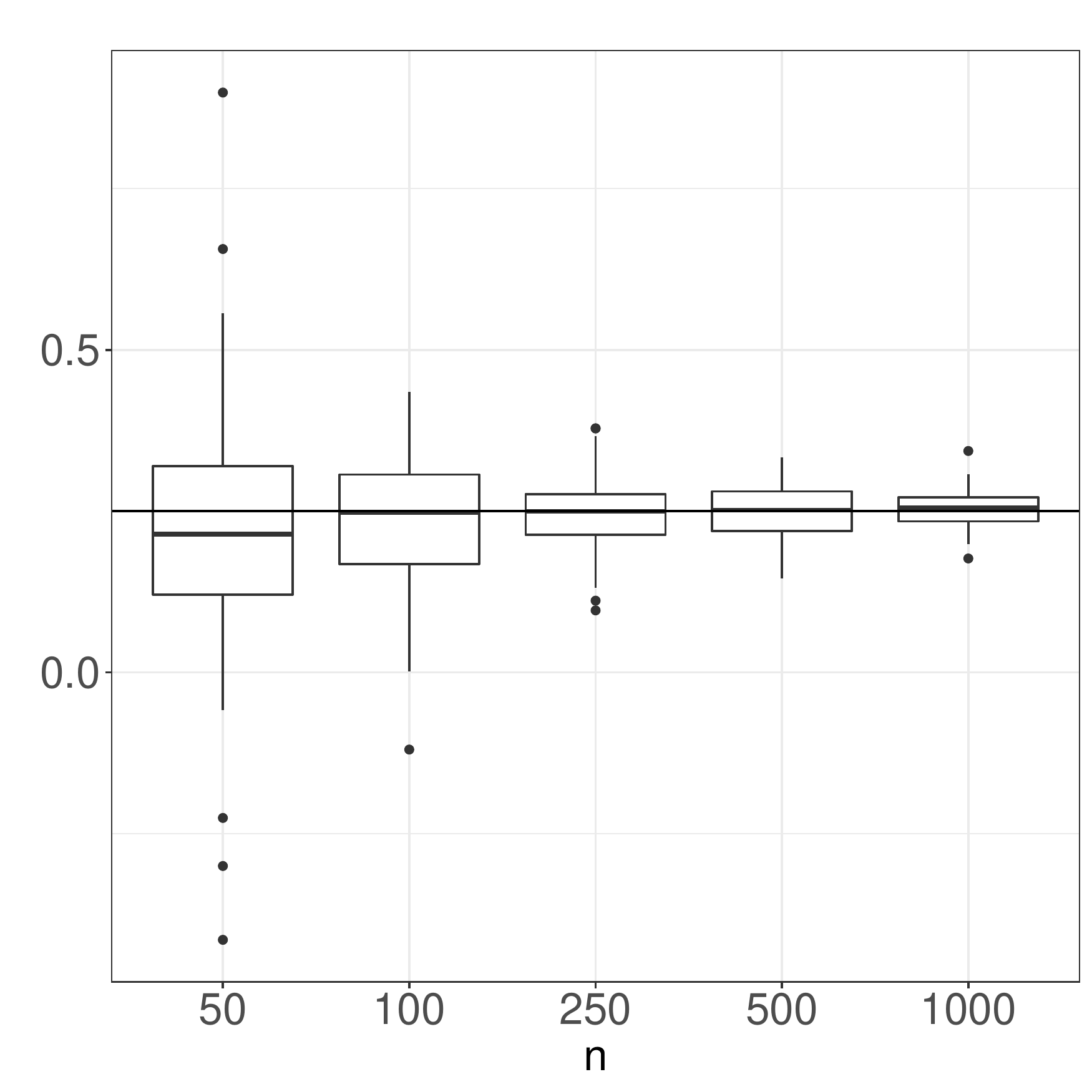}
  \caption{Boxplots for the estimations of $\gamma_2^\star=1/4$ in Model (\ref{eq:mut_Wt}) with no regressor and $q=2$ (left), $\gamma_2^\star=1/3$ in Model (\ref{eq:mut_Wt}) 
with no regressor $q=3$ (middle) and of $\gamma_3^\star=1/4$
    in Model (\ref{eq:mut_Wt}) with no regressor and $q=3$ (right).
\textcolor{black}{The horizontal lines correspond to the true values of the parameters.}
  \label{fig:estim:gam2_3}}
\end{figure}

Moreover, it has to be noticed that in this particular context where there are no covariates ($p=0$), the
performance of our approach in terms of parameters estimation is similar to the one of the package \texttt{glarma}
described in \cite{glarma:package}.



\subsubsection{Sparse estimation of the ${\beta}_i^\star$}\label{sec:sparse_estim}

In this section, we assess the performance of our methodology in the case where $Y_1,\dots,Y_n$ satisfy the model
defined by (\ref{eq:Yt}), (\ref{eq:mut_Wt}) and (\ref{eq:Zt}) for $n=1000$,
$q\in\{1,2,3\}$ and $p=100$. We shall moreover assume that the sparsity in the
$\beta_i^\star$ is very high, namely all the $\beta_i^\star$ are assumed to be equal to zero except for five of them which are equal to
1.739, 0.387, 0.295, -0.644 and -0.135.
 The corresponding results are displayed in Figures \ref{fig:roc1}, \ref{fig:roc2} and \ref{fig:roc3}.

 \begin{figure}[!htbp]
   \includegraphics[scale=0.3]{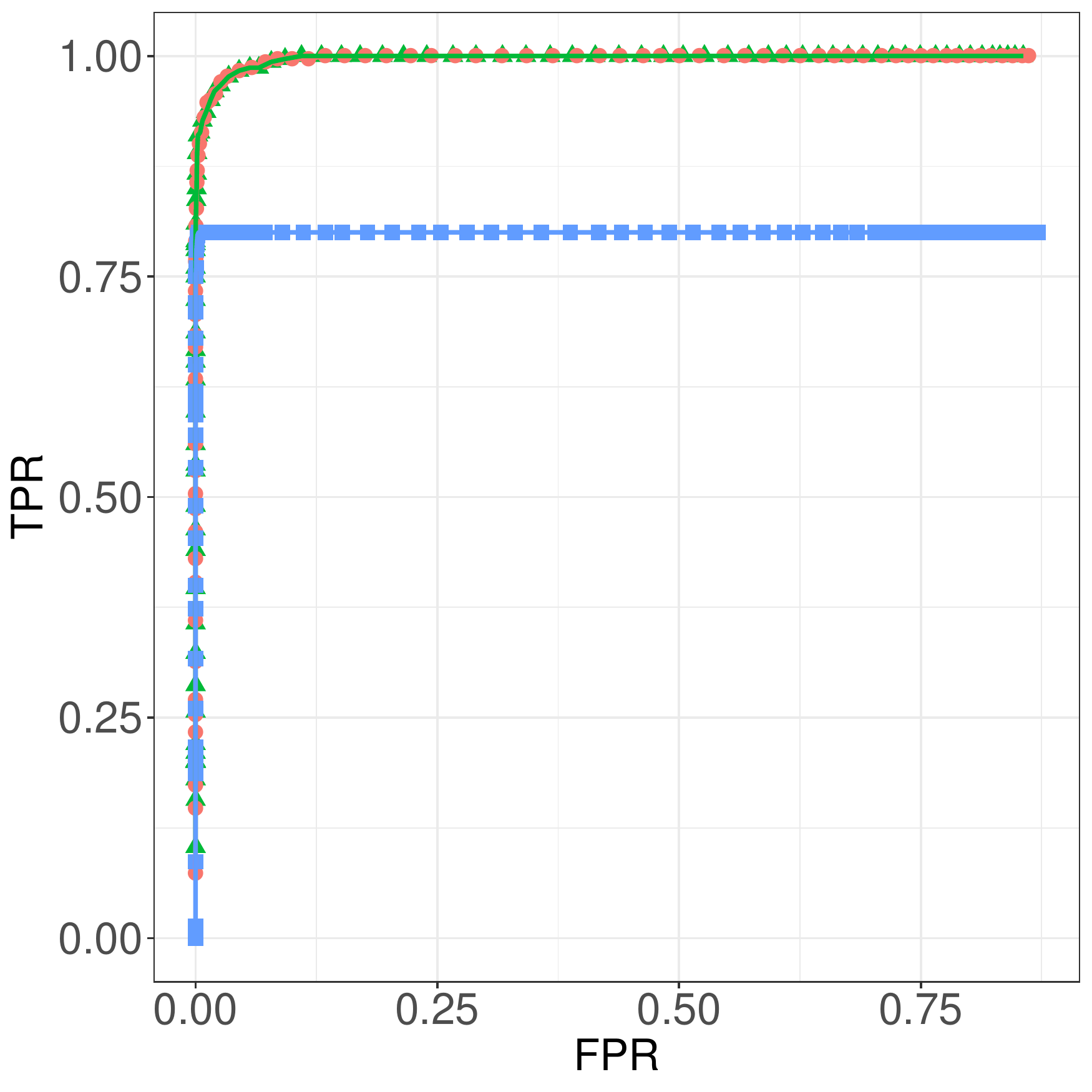}
\includegraphics[scale=0.32]{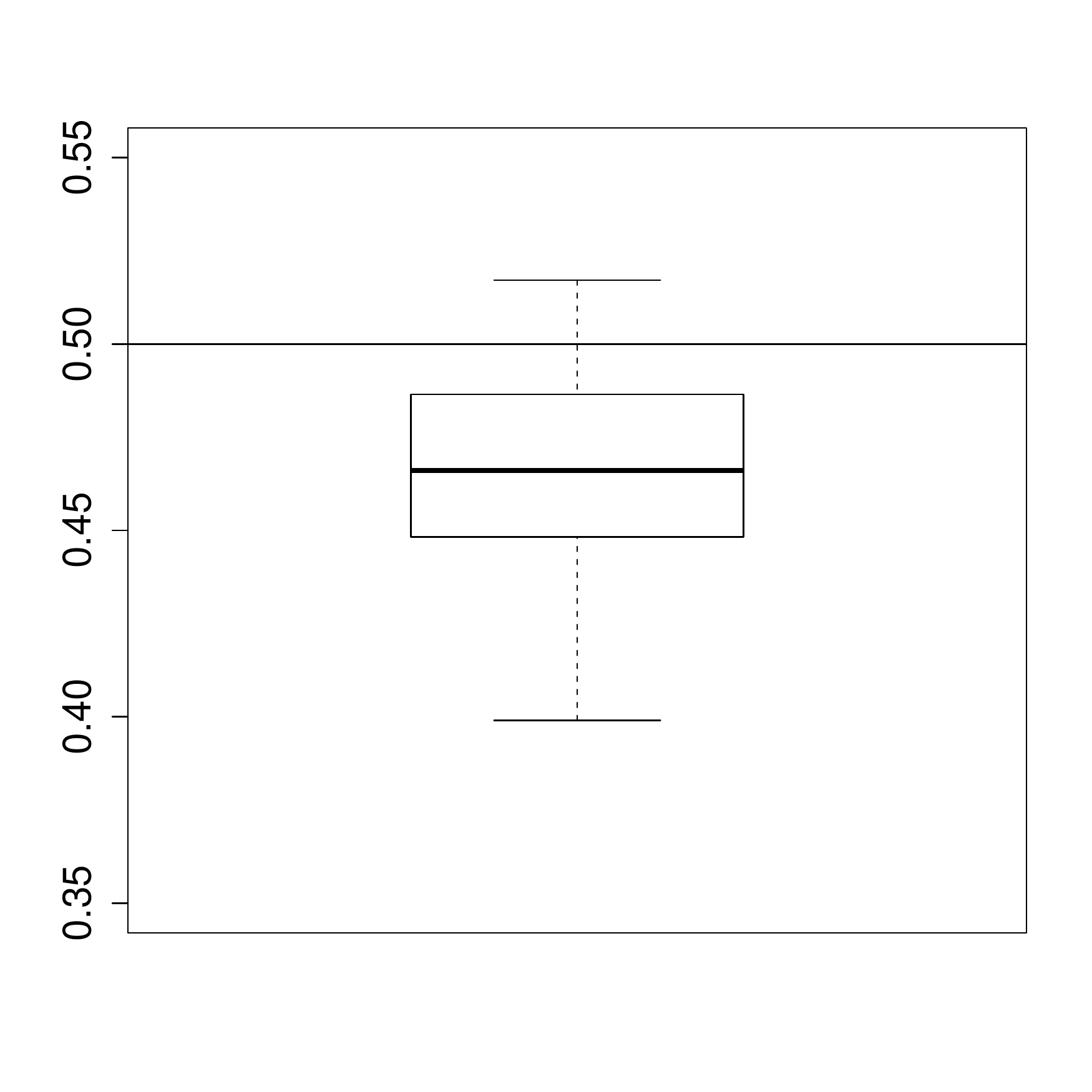}
\caption{ROC curves for recovering the support of $\boldsymbol{\beta}^\star$  in Model (\ref{eq:mut_Wt}) with $q=1$ (left) and boxplot for the estimation of
  ${\gamma_1}^\star$ in the same model (right).
 The ROC curve when ${\gamma_1}^\star$ is known (resp. unknown) is in red (resp. green) and in the model where ${\gamma_1}^\star=0$ in blue.
\textcolor{black}{The horizontal lines correspond to the true values of the parameters.}\label{fig:roc1}}
 \end{figure}
  
 \begin{figure}[!htbp]
   \includegraphics[scale=0.28]{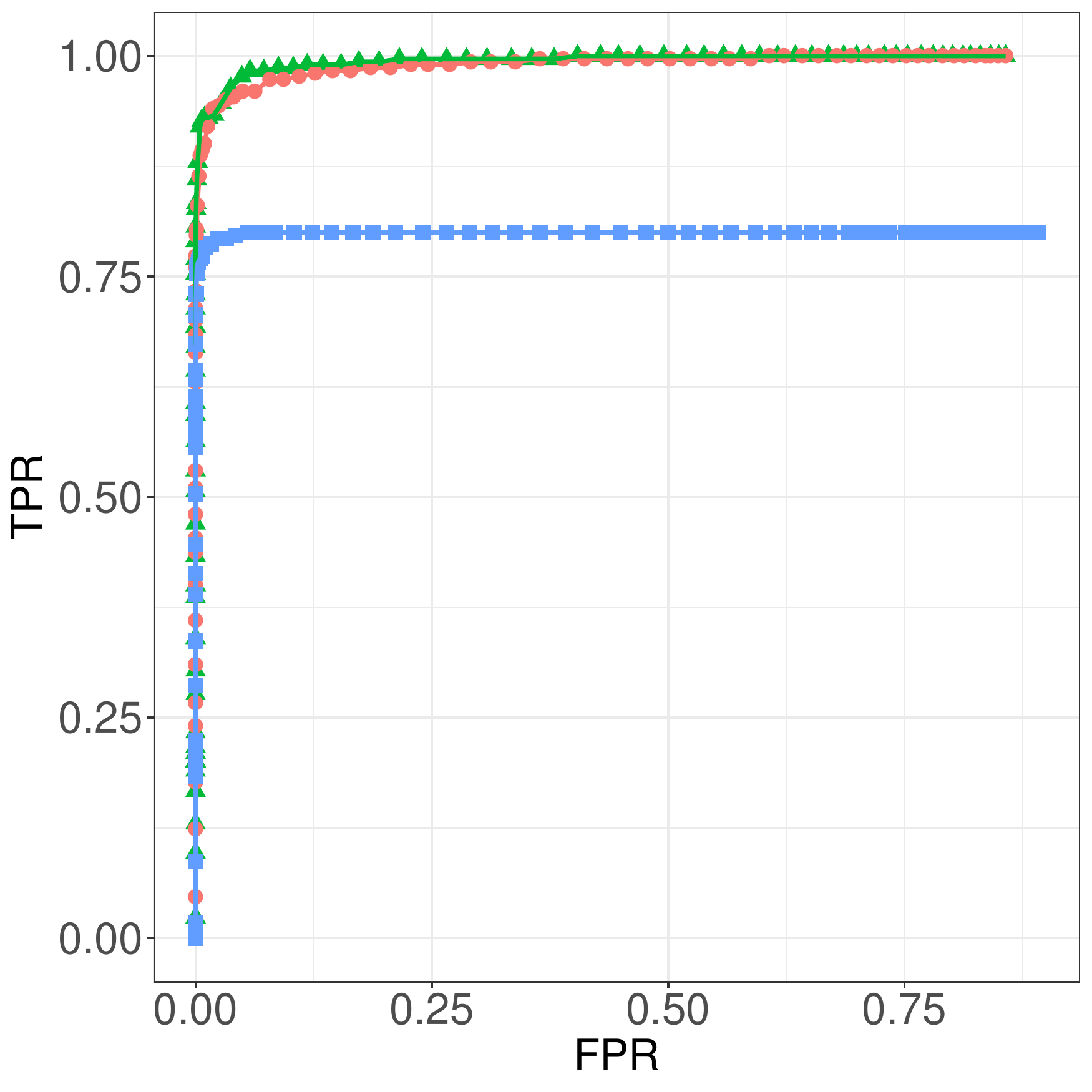}
\includegraphics[scale=0.28]{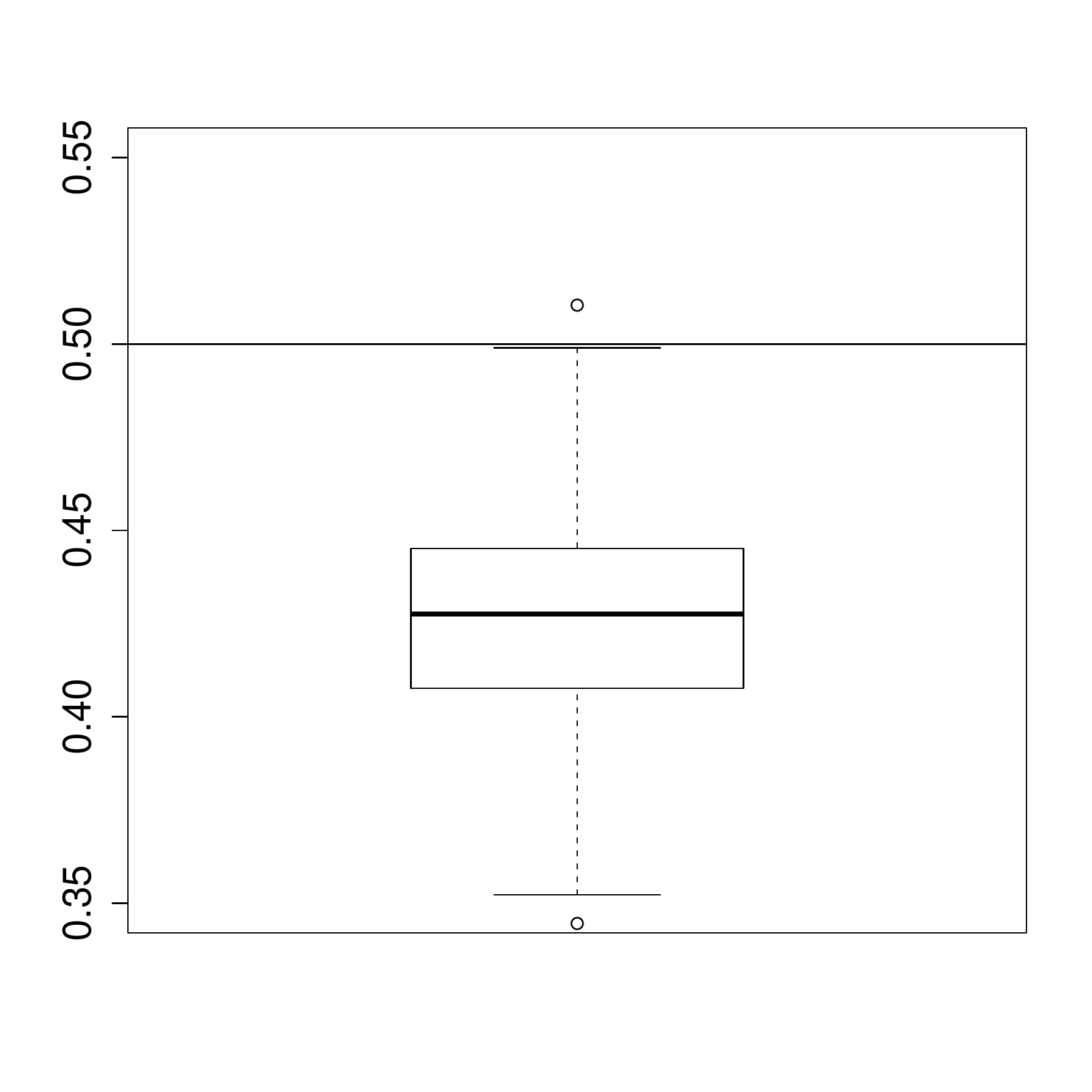}
\includegraphics[scale=0.28]{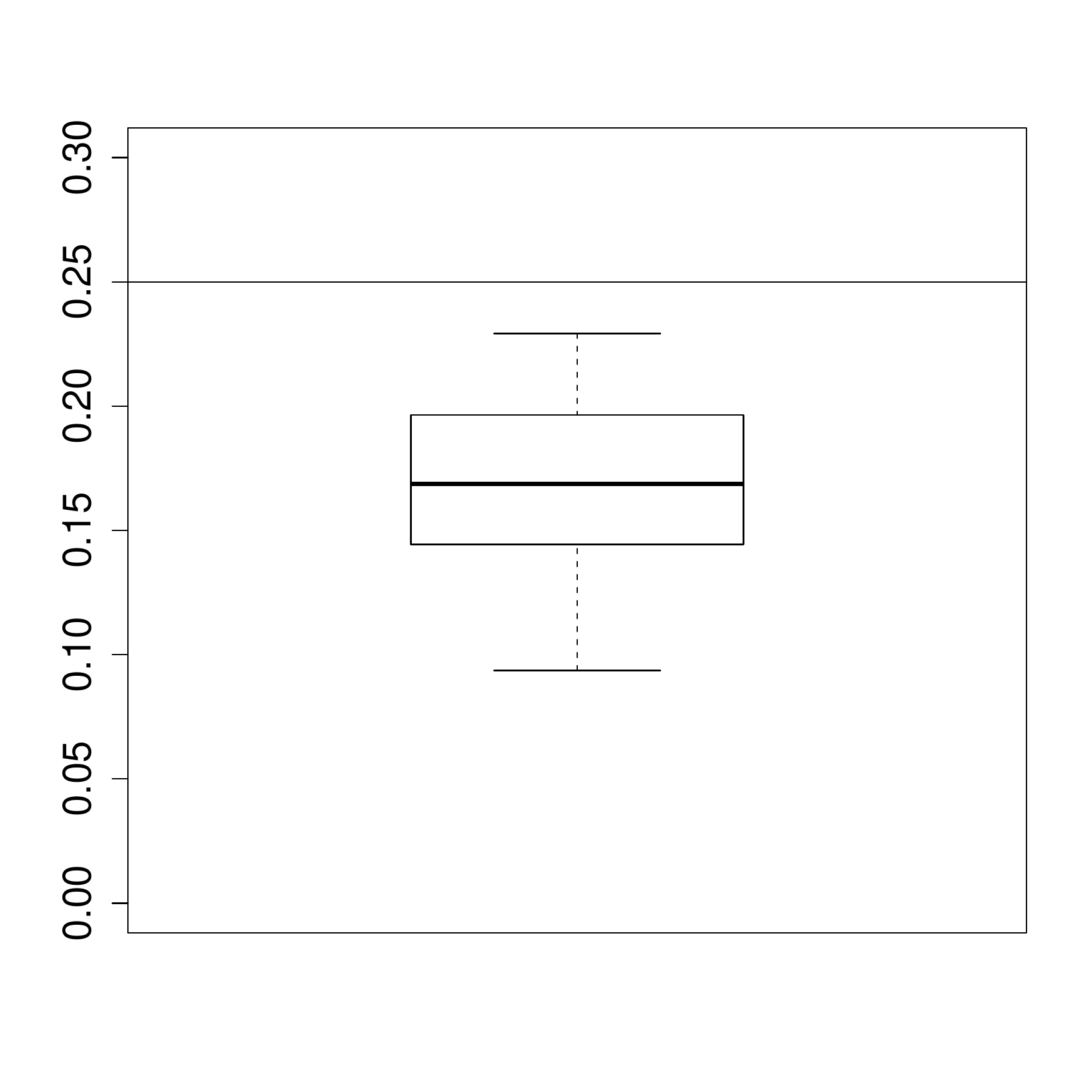}
\caption{ROC curves for recovering the support of $\boldsymbol{\beta}^\star$  in Model (\ref{eq:mut_Wt}) with $q=2$ (left), boxplots for the
  estimation of ${\gamma_1}^\star$ (middle)  and ${\gamma_2}^\star$ (right)  in the same model.
The ROC curve when ${\gamma_1}^\star$ and ${\gamma_2}^\star$ are known (resp. unknown) is in red (resp. green) and in the model where
${\gamma_1}^\star={\gamma_2}^\star=0$ in blue.
\textcolor{black}{The horizontal lines correspond to the true values of the parameters.}\label{fig:roc2}}
 \end{figure}

\begin{figure}[!htbp]
   \includegraphics[scale=0.3]{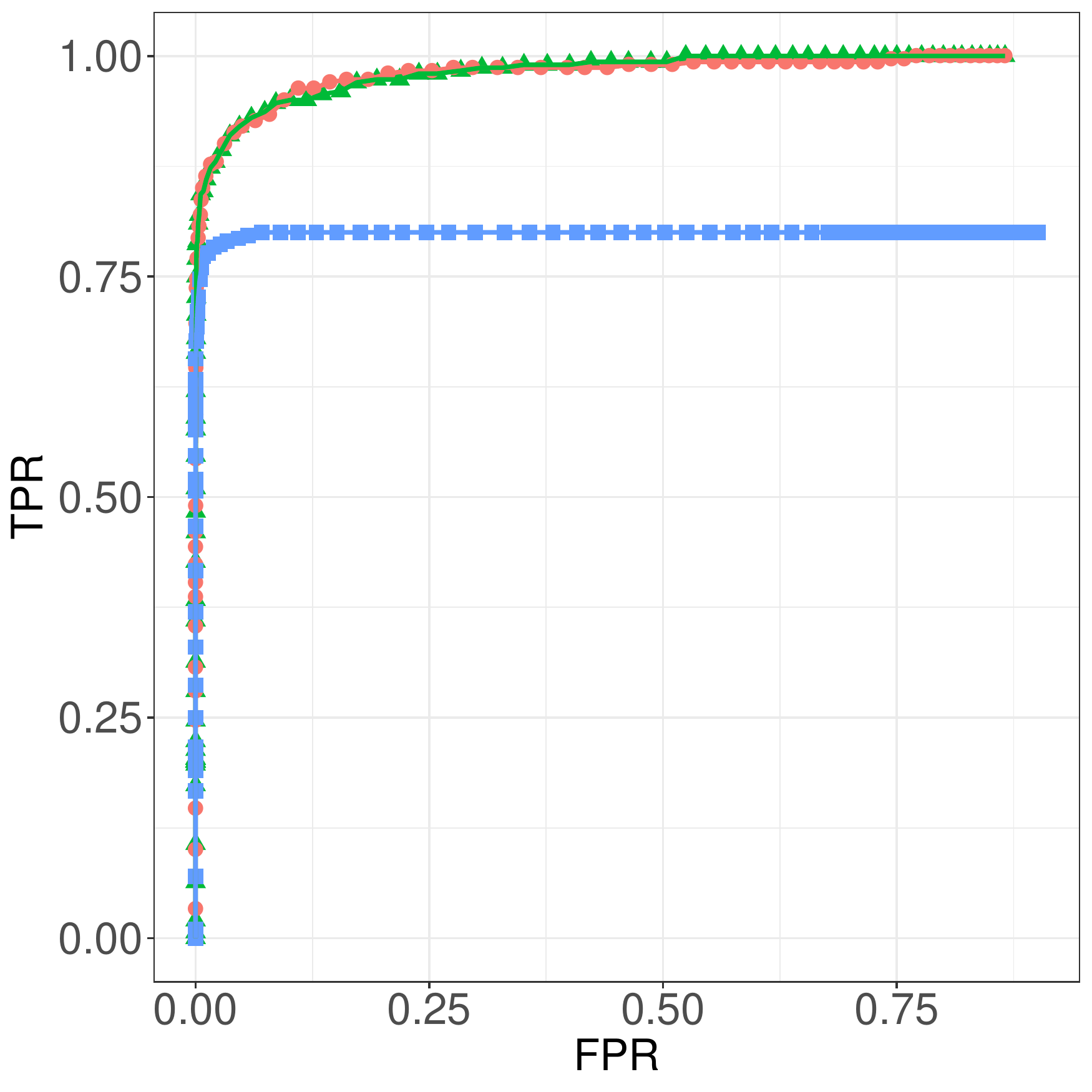}
\includegraphics[scale=0.3]{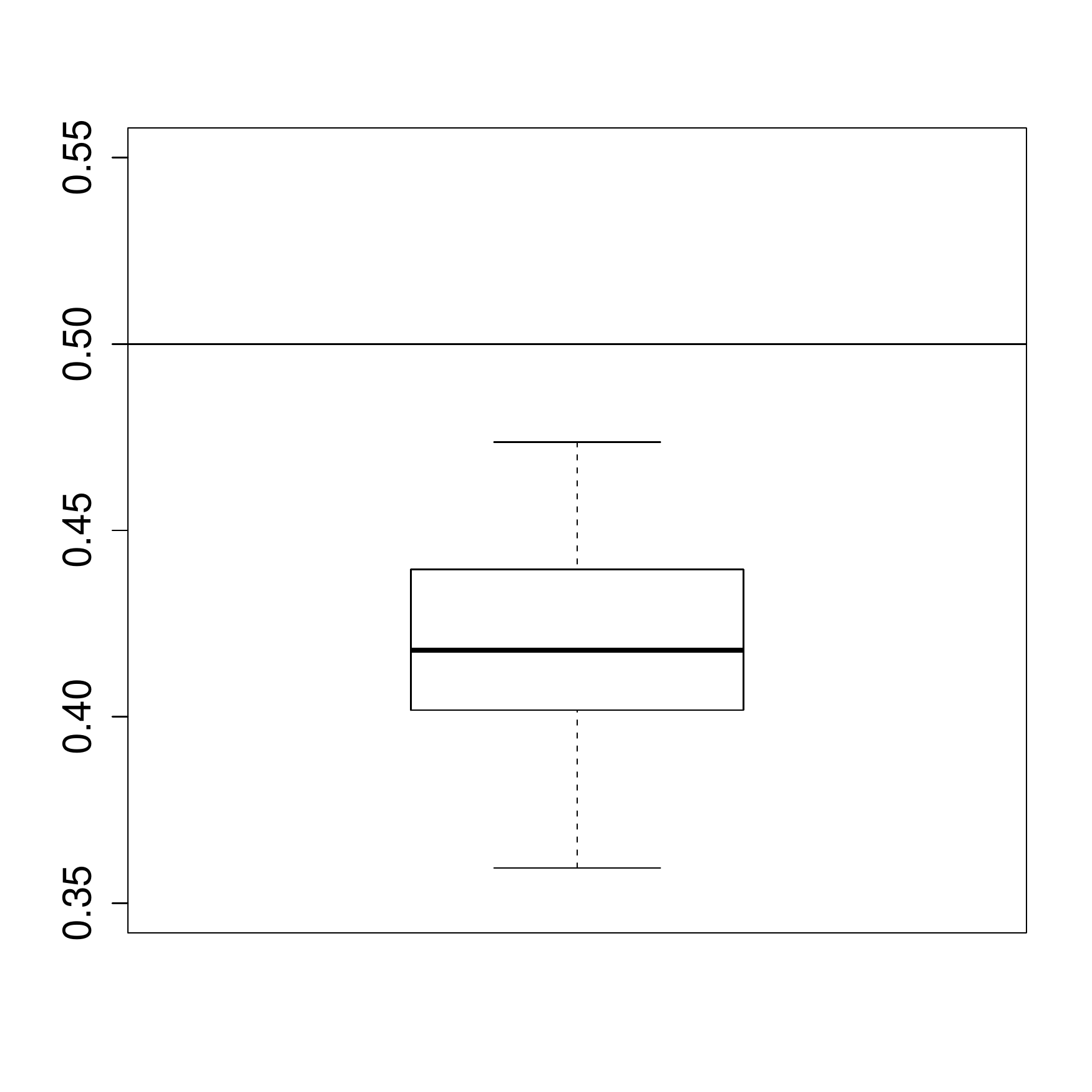}\\
\includegraphics[scale=0.3]{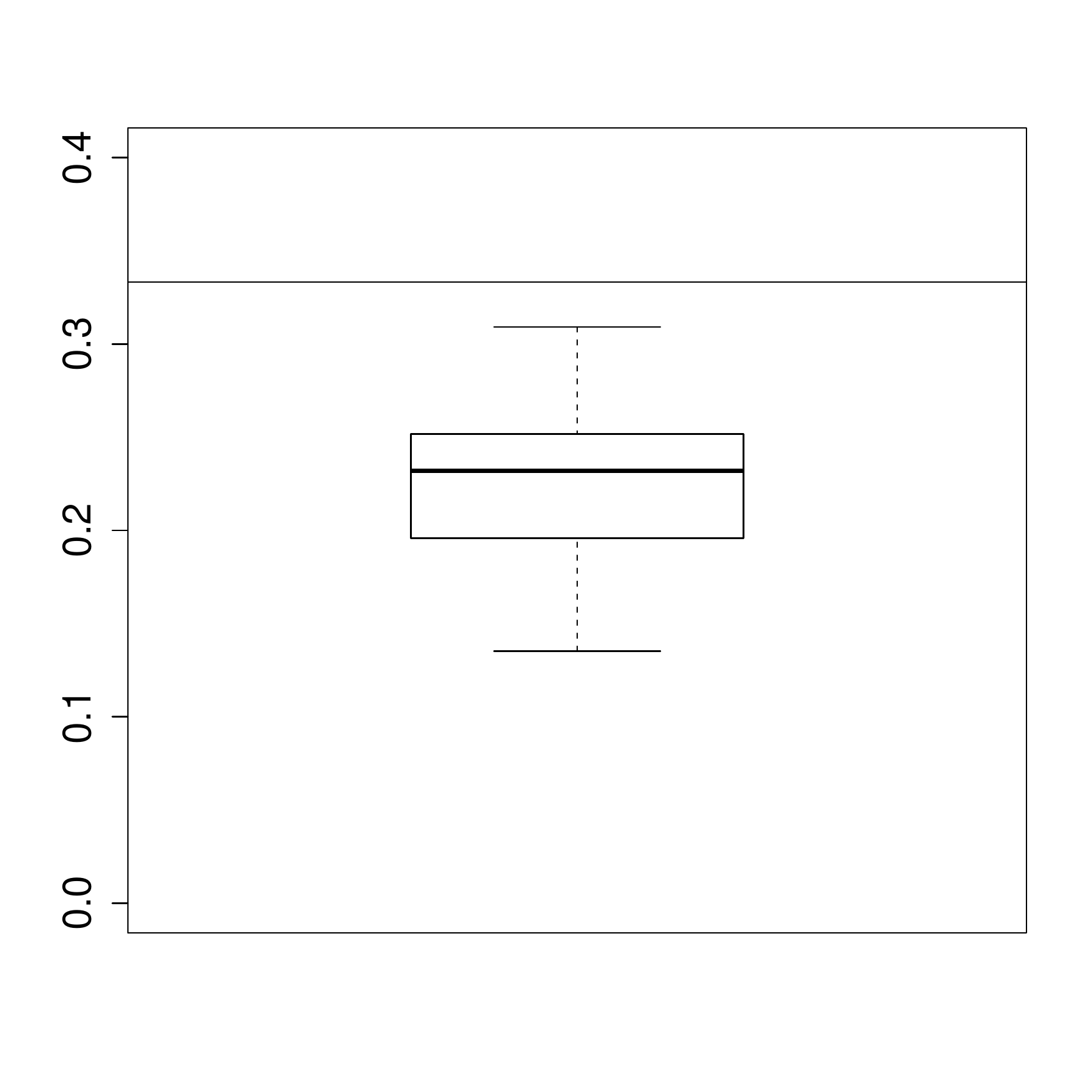}
\includegraphics[scale=0.3]{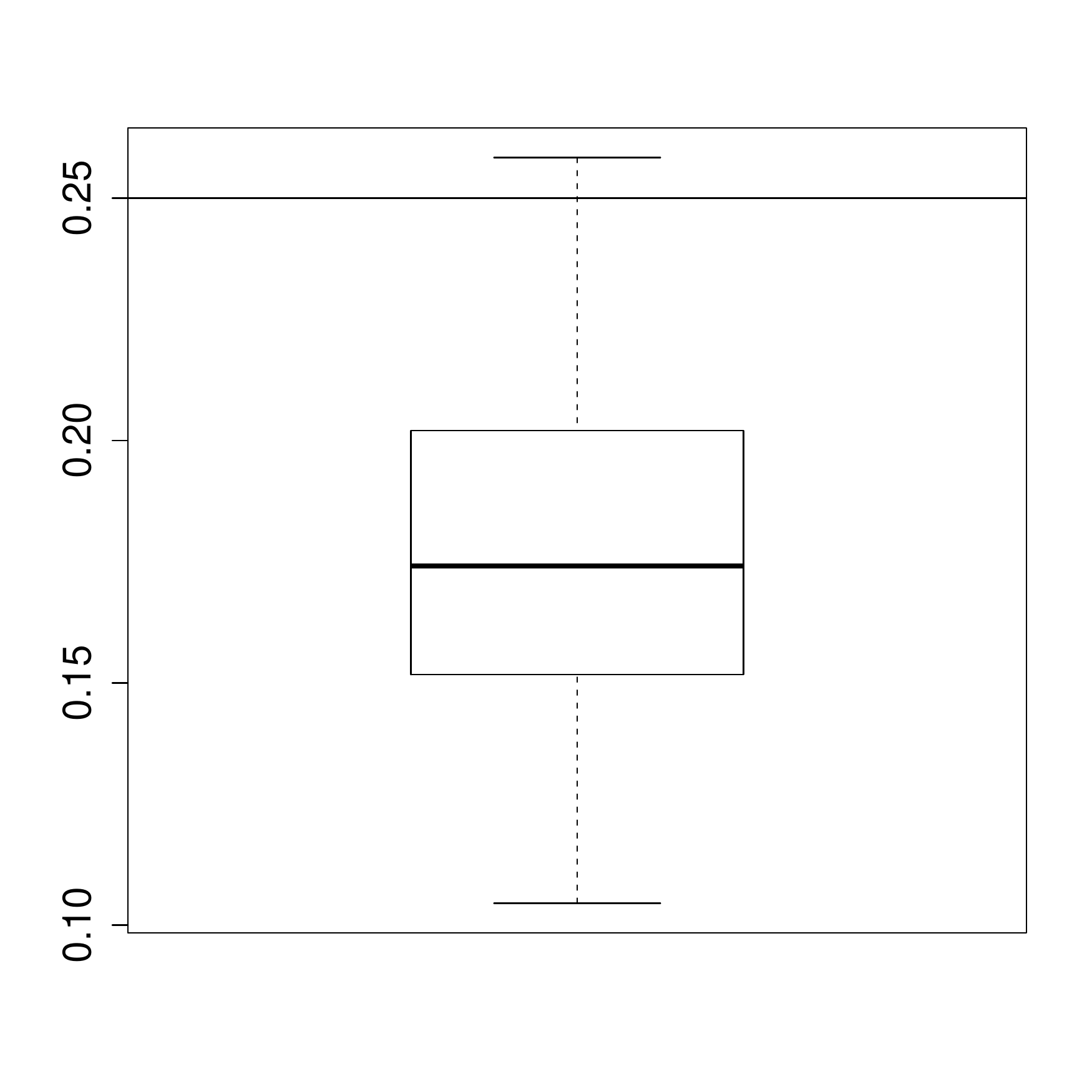}
   \caption{ROC curves for recovering the support of $\boldsymbol{\beta}^\star$  in Model (\ref{eq:mut_Wt}) with $q=3$ (top left), boxplots for the estimation of ${\gamma_1}^\star$ (top right), ${\gamma_2}^\star$ (bottom left) and ${\gamma_3}^\star$ (bottom right) in the same model.
 The ROC curve when ${\gamma_1}^\star$, ${\gamma_2}^\star$ and ${\gamma_3}^\star$ are known (resp. unknown) is in red (resp. green) and in the model where 
 ${\gamma_1}^\star={\gamma_2}^\star={\gamma_3}^\star=0$ in blue. \textcolor{black}{The horizontal lines correspond to the true values of the parameters.}
   \label{fig:roc3}}
 \end{figure}

The ROC curves of Figures \ref{fig:roc1}, \ref{fig:roc2} and \ref{fig:roc3}
display the True Positive Rate (TPR) with respect to the False Positive Rate (FPR). On the one hand, we can see from these figures that
the performance of our methodology
when $\boldsymbol{\gamma}^\star$ is known is on a par
with the one of our methodology when $\boldsymbol{\gamma}^\star$ is  unknown.
On the other hand,  our methodology outperforms the variable selection approach
described in \cite{friedman:hastie:tibshirani:2010} which assumes that the observations are the realizations of a Poisson 
distribution but does not take into account the dependence between the observations.

We can also observe from these figures that the performance of our methodology is not altered by the underestimation
of $\boldsymbol{\gamma}^\star$ in the different situations: $q=1$, 2 or 3.


\subsubsection{Choice of $\lambda$}\label{sec:lambda}

In order to improve our methodology, we propose hereafter a strategy for tuning the parameter $\lambda$ appearing in (\ref{eq:beta_hat}). 

We first take the smallest $\lambda$ provided by the \texttt{glmnet} package for computing (\ref{eq:beta_hat}). This $\lambda$ denoted $\lambda_{\textrm{min}}$
is then used in the stability selection procedure proposed by \cite{meinshausen:buhlmann:2010}
which guarantees the robustness of the selected variables. This latter approach
can be described as follows.
The vector $\mathcal{Y}$ defined in (\ref{eq:def_Y_X}) is randomly split into several subsamples of size $(p+1)/2$, which corresponds to the half of the length of $\mathcal{Y}$.
For each subsample, the LASSO criterion is applied with $\lambda=\lambda_{\textrm{min}}$ and the indices $i$ of the non null $\widehat{\beta}_i$ are stored. 
Then, for a given threshold, we keep in the final
 set of selected variables only the variables appearing a number of times larger than this threshold. 
 In practice, we generated $1000$ subsamples of $\mathcal{Y}$. 

Figure \ref{fig:freq_ronds} displays the results obtained when applying this strategy to observations $Y_1,\dots,Y_n$ satisfying the model
defined by (\ref{eq:Yt}), (\ref{eq:mut_Wt}) and (\ref{eq:Zt}) for $n=1000$,
$q=1$, $p=100$ and when only five coefficients $\beta_i^\star$ are not null. We can see from this figure that the positions of the non null coefficients are well retrieved for most of the thresholds
and that the number of false positive is higher when the threshold is too low. Based on this figure, taking a threshold equal to 0.9 seems to achieve
 an interesting trade-off between false and true positives.

\begin{figure}[!htbp]
\includegraphics[scale=0.4]{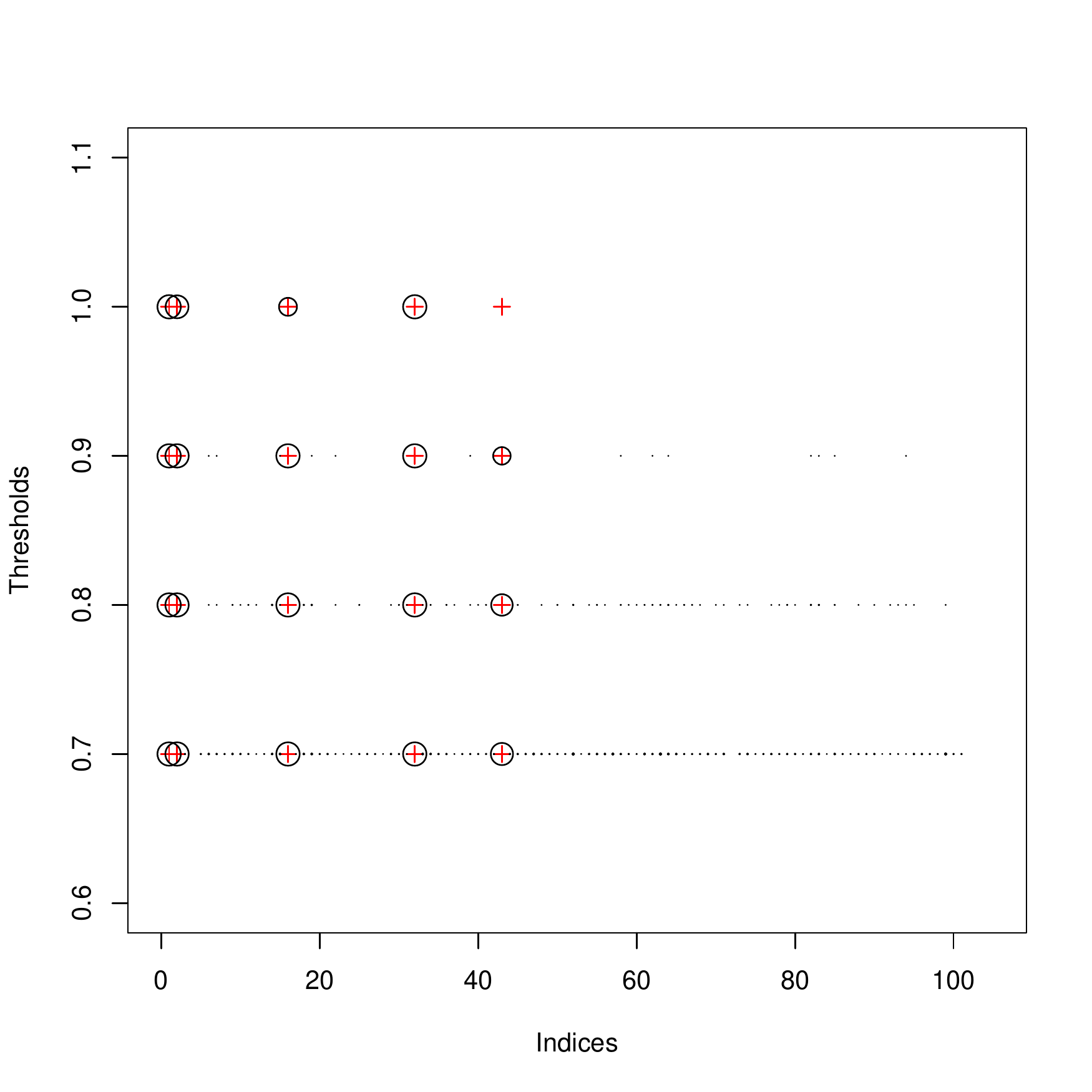}
\caption{Means of the selection frequencies of the indices of the final estimator of $\boldsymbol{\beta}^\star$ for different thresholds: 0.7, 0.8, 0.9 and 1
and based on 100 replications. 
The larger the size of circles the larger the frequency of considering the corresponding
coefficients as non null. The positions of the non null values of $\boldsymbol{\beta}^\star$ are displayed with red crosses.\label{fig:freq_ronds}}
\end{figure}

This choice is also confirmed by the results of Figure \ref{fig:freq_barres} which gives the means of selection frequencies for each position.

\begin{figure}[!htbp]
\includegraphics[scale=0.35]{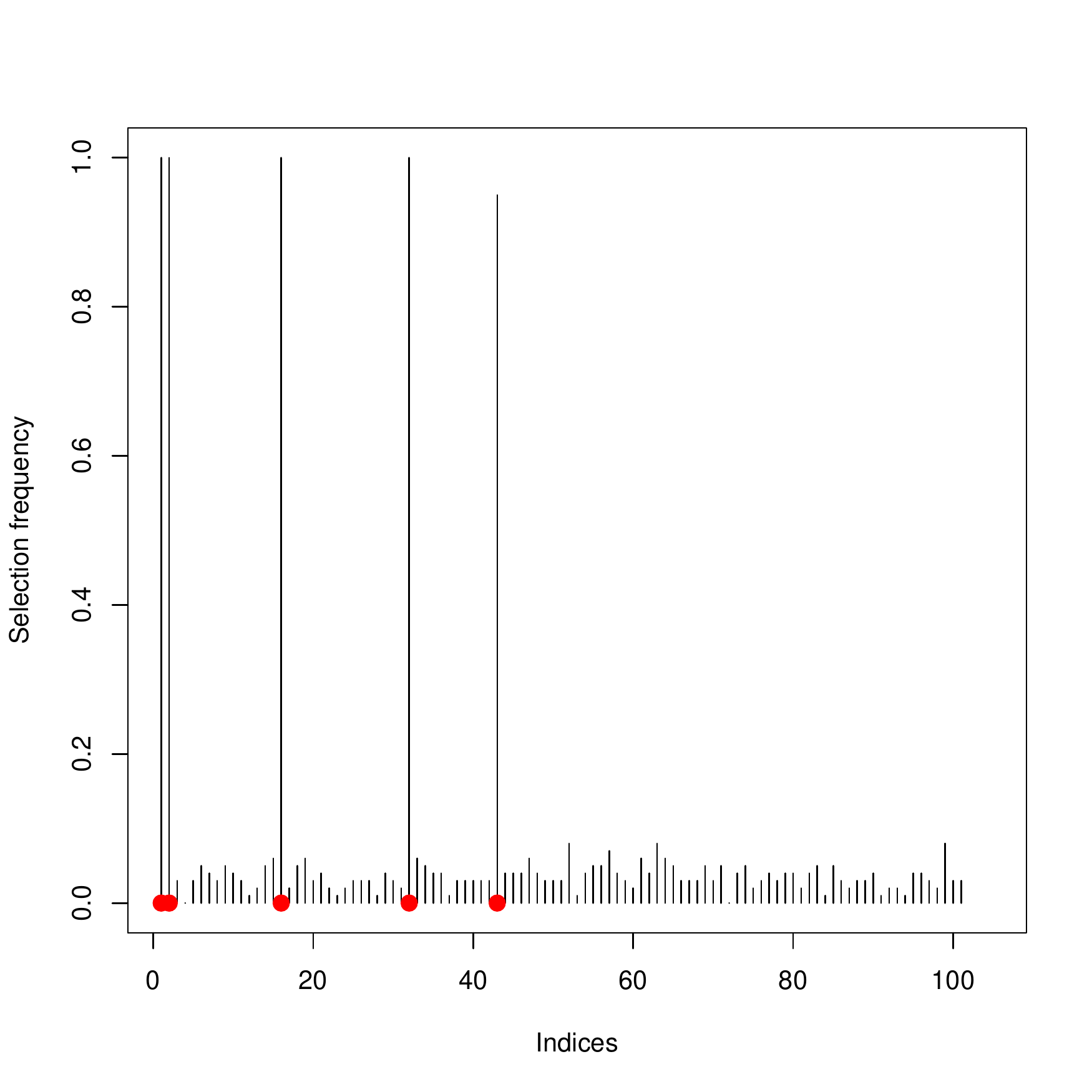}
\includegraphics[scale=0.35]{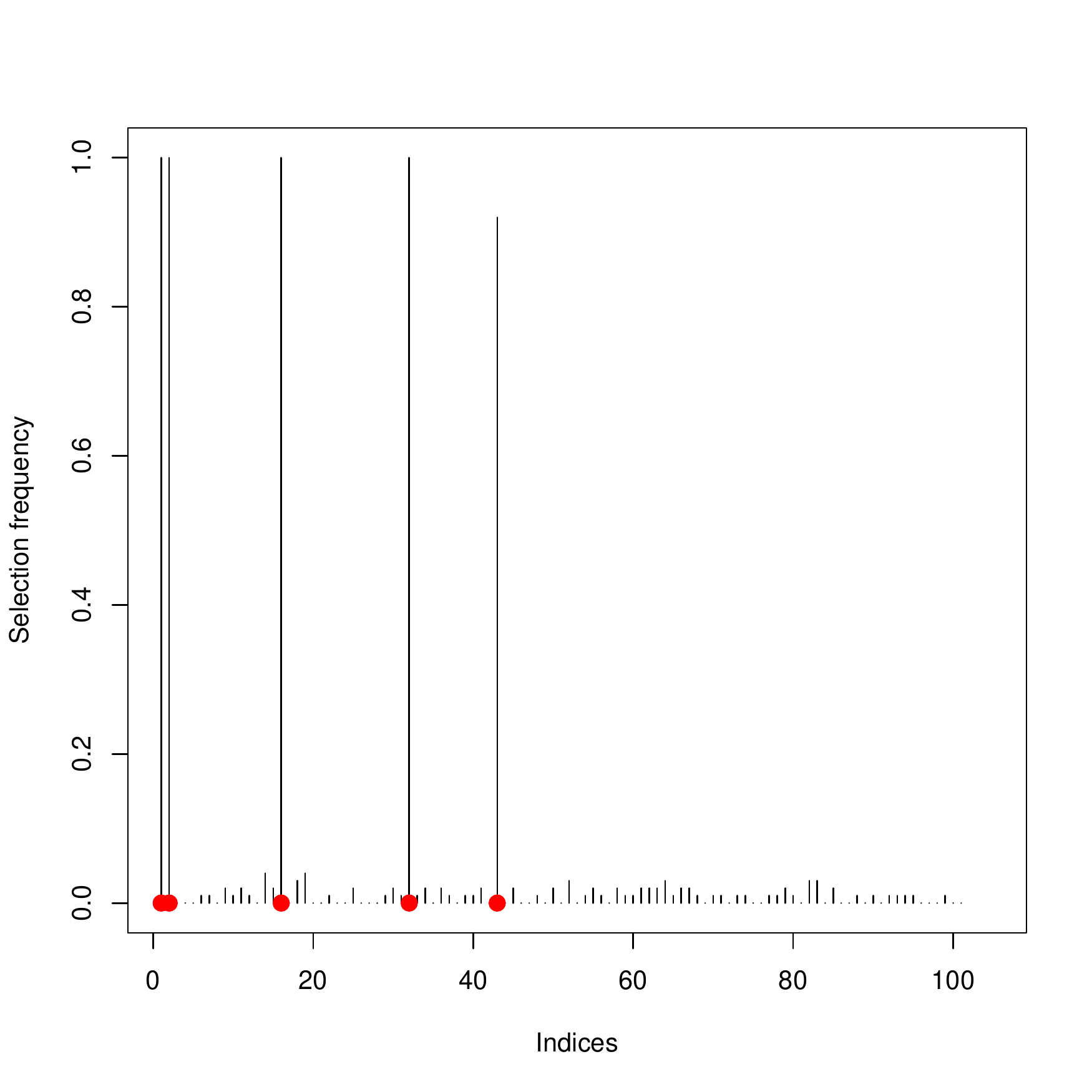}
\includegraphics[scale=0.35]{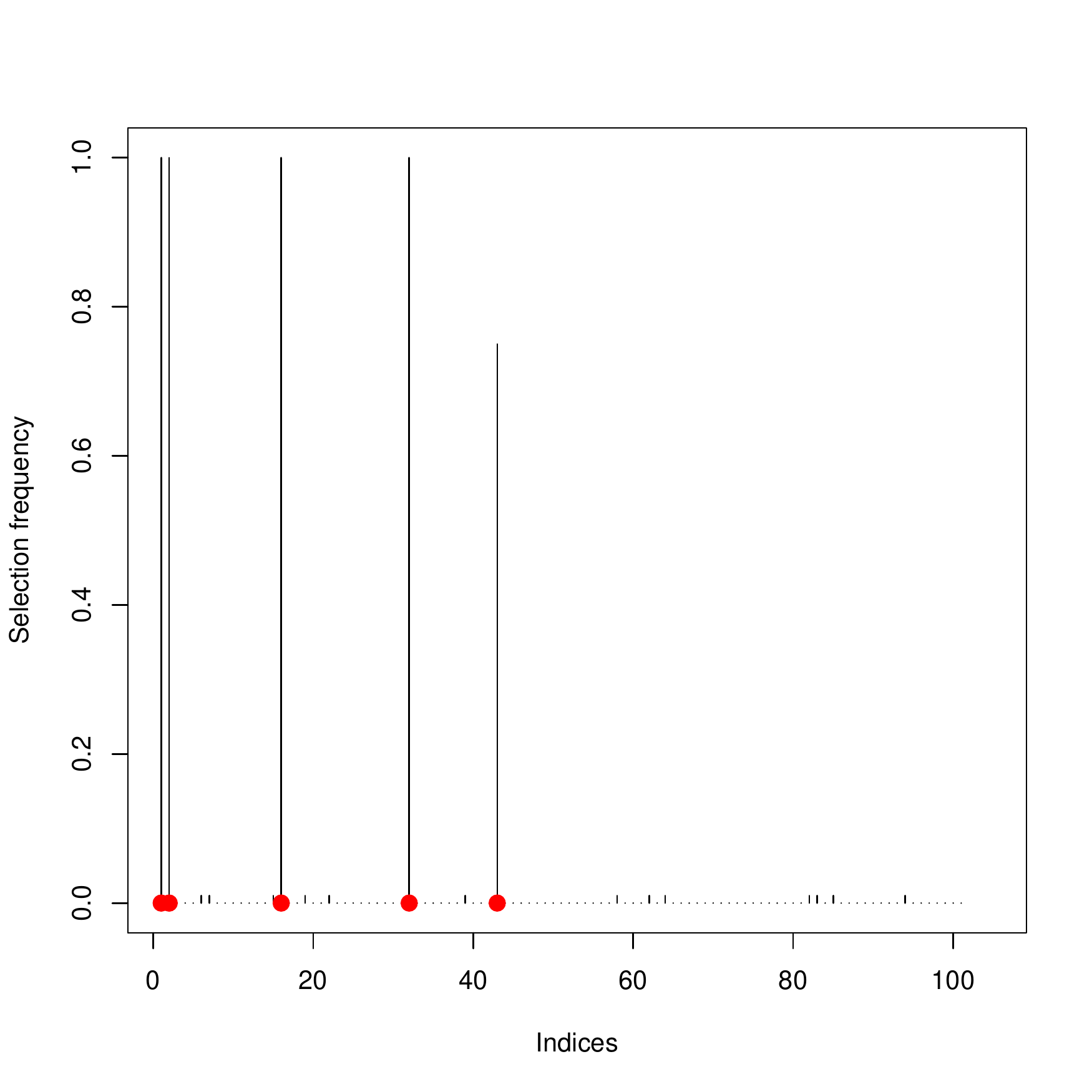}
\includegraphics[scale=0.35]{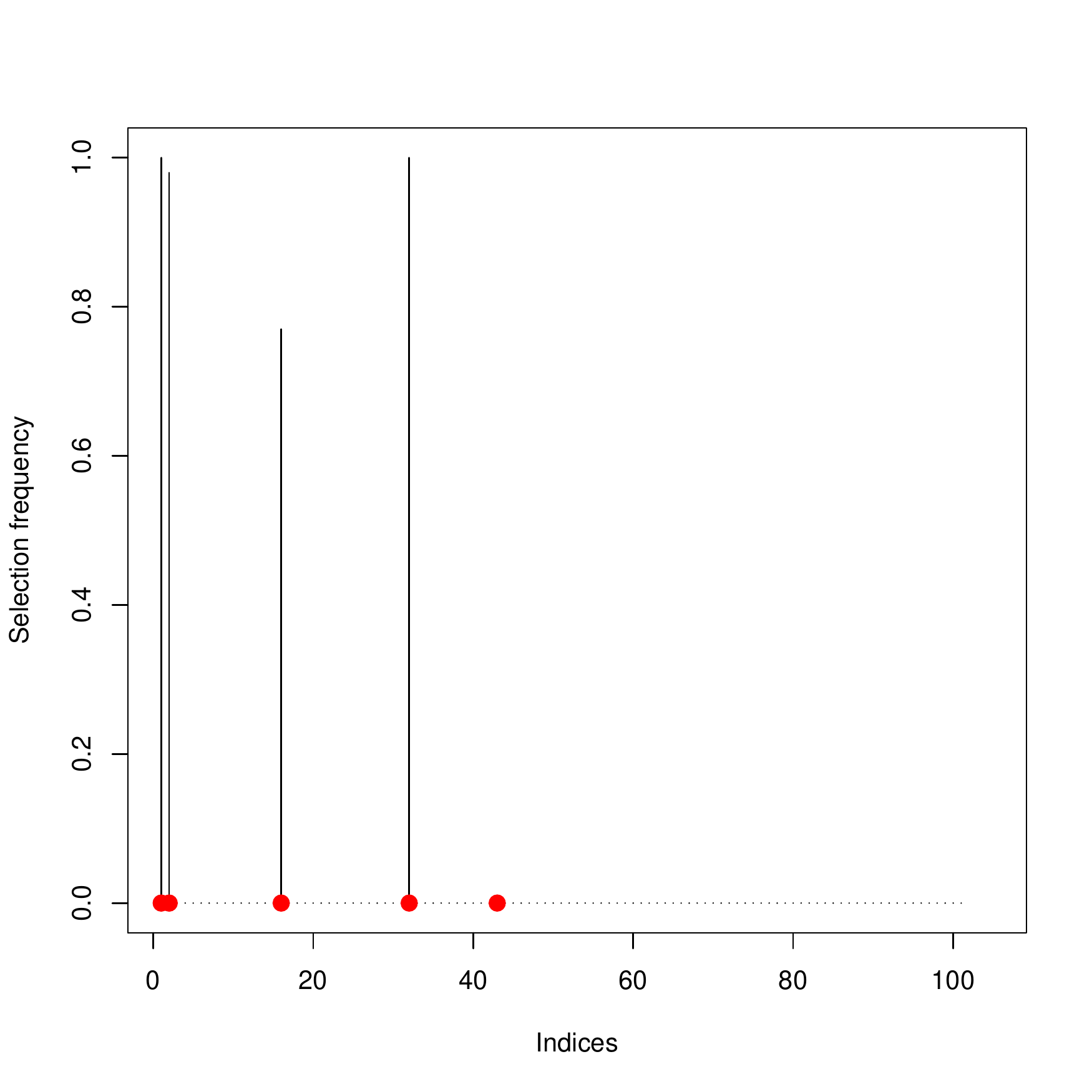}
\caption{Means of the selection frequencies of the indices of the final estimator of $\boldsymbol{\beta}^\star$ based on 100 replications 
for different thresholds: 0.7 (top left), 0.8 (top right), 0.9 (bottom left) and
1 (bottom right). 
The positions of the non null values of $\boldsymbol{\beta}^\star$ are displayed with red plain circles.\label{fig:freq_barres}}
\end{figure}

\subsection{Numerical performance}

Figure \ref{fig:time} displays the means and standard errors of the computational times for our variable selection method. 
We can see from this figure that it takes only around 30 seconds to process observations $Y_1,\dots,Y_n$
satisfying (\ref{eq:Yt}), (\ref{eq:mut_Wt}) and (\ref{eq:Zt}) when $n=1000$, $p=100$, $q=3$ and when the number of replications used in the stability selection step described in Section 
\ref{sec:lambda} is equal to 1000.

\begin{figure}[!htbp]
\includegraphics[scale=0.3]{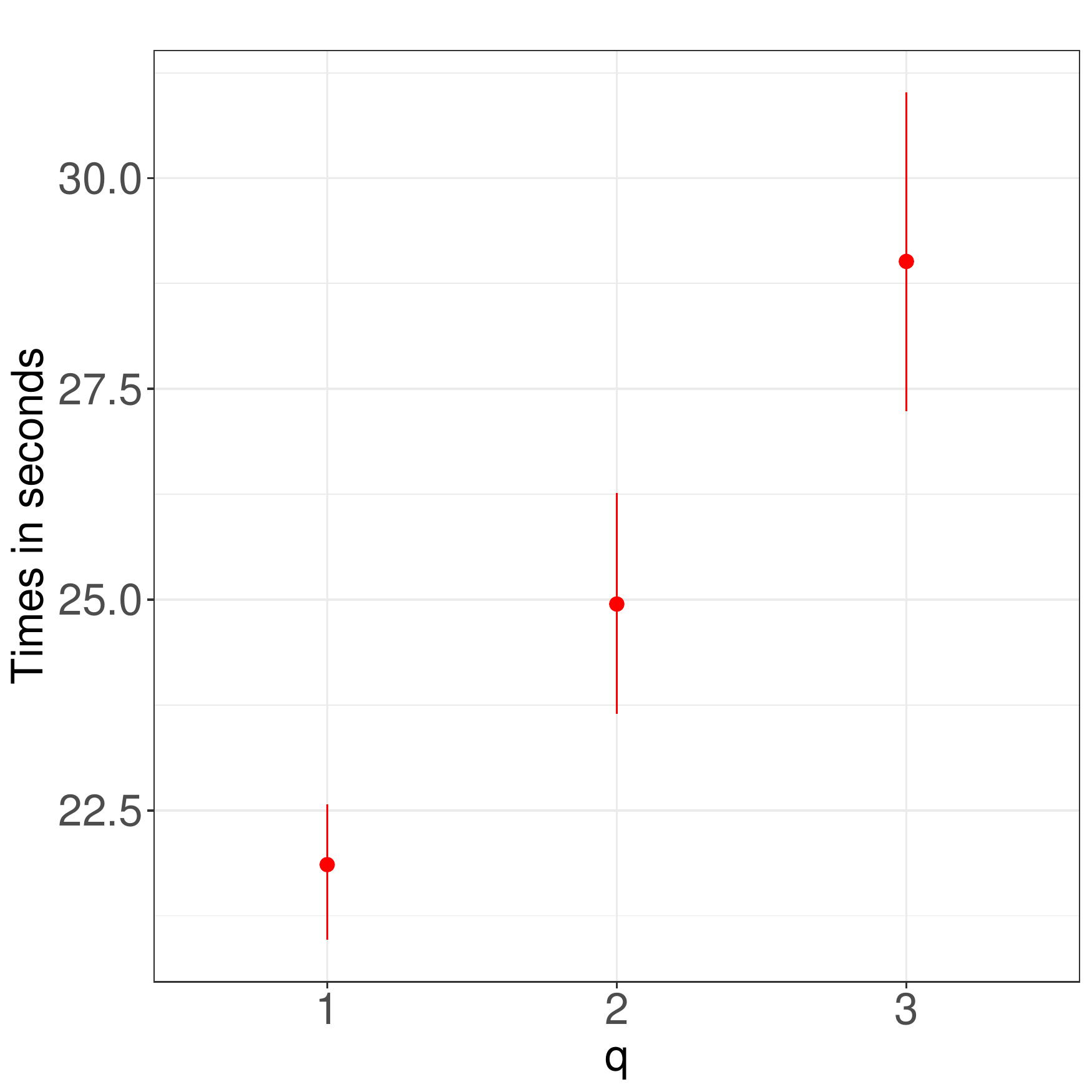}
\caption{Means and standard errors of the computational times in seconds for our variable selection approach in the case where $n=1000$, $p=100$, $q=1$, 2 and 3.\label{fig:time}}
\end{figure}


\section{Proofs}\label{sec:proofs}

\subsection{\textcolor{black}{Computation of the first and second derivatives of $W_t$ defined in (\ref{eq:Wt})}}

The computations given below are similar to those provided in \cite{davis:dunsmuir:street:2005} but are specific to the parametrization 
$\boldsymbol{\delta}=(\boldsymbol{\beta}',\boldsymbol{\gamma}')$ considered in this paper.

\subsubsection{\textcolor{black}{Computation of the first derivatives of $W_t$ }}\label{subsub:first_derive}

By the definition of $W_t$ given in (\ref{eq:Wt}), we get
\begin{equation*}
\frac{\partial W_t}{\partial \boldsymbol{\delta}}(\boldsymbol{\delta})=\frac{\partial\boldsymbol{\beta}' x_t}{\partial \boldsymbol{\delta}}+\frac{\partial Z_t}{\partial \boldsymbol{\delta}}
(\boldsymbol{\delta}),
\end{equation*}
where $\boldsymbol{\beta}$, $x_t$ and $Z_t$ are defined in (\ref{eq:Wt}). 
More precisely, for all $k\in\{0,\dots,p\}$, $\ell\in\{1,\dots,q\}$ and $t\in\{1,\dots,n\}$, by (\ref{eq:Et}),
\begin{align}\label{eq:gradW_beta}
\frac{\partial W_t}{\partial \beta_k}&=x_{t,k}+\frac{\partial Z_t}{\partial \beta_k}=x_{t,k}+\sum_{j=1}^{q\wedge (t-1)}\gamma_j\frac{\partial E_{t-j}}{\partial \beta_k}\nonumber\\
&=x_{t,k}-\sum_{j=1}^{q\wedge (t-1)}\gamma_j Y_{t-j}\frac{\partial W_{t-j}}{\partial \beta_k}\exp(-W_{t-j})=x_{t,k}-\sum_{j=1}^{q\wedge (t-1)}\gamma_j(1+E_{t-j})\frac{\partial W_{t-j}}{\partial \beta_k},\\
\frac{\partial W_t}{\partial \gamma_\ell}&=E_{t-\ell}+\sum_{j=1}^{q\wedge (t-1)} \gamma_j\frac{\partial E_{t-j}}{\partial\gamma_\ell}\nonumber\\\label{eq:gradW_gamma}
&=E_{t-\ell}-\sum_{j=1}^{q\wedge (t-1)}\gamma_j Y_{t-j}\frac{\partial W_{t-j}}{\partial \gamma_\ell}\exp(-W_{t-j})=E_{t-\ell}-\sum_{j=1}^{q\wedge (t-1)}\gamma_j(1+E_{t-j})\frac{\partial W_{t-j}}{\partial \gamma_\ell},
\end{align}
where we used that  $E_t=0,\; \forall t\leq 0$.

The first derivatives of $W_t$ are thus obtained from the following recursive expressions. For all $k\in\{0,\dots,p\}$ 
\begin{align*}
\frac{\partial W_1}{\partial \beta_k}&=x_{1,k},\\
\frac{\partial W_2}{\partial \beta_k}&=x_{2,k}-\gamma_1(1+E_{1})\frac{\partial W_{1}}{\partial \beta_k},
\end{align*}
where
\begin{equation}\label{eq:E1}
W_1=\boldsymbol{\beta}' x_1 \textrm{ and } E_1=Y_1\exp(-W_1)-1.
\end{equation}
Moreover,
\begin{equation*}
\frac{\partial W_3}{\partial \beta_k}=x_{3,k}-\gamma_1(1+E_{2})\frac{\partial W_{2}}{\partial \beta_k}-\gamma_2(1+E_{1})\frac{\partial W_{1}}{\partial \beta_k},
\end{equation*}
where
\begin{equation}\label{eq:E2}
W_2=\boldsymbol{\beta}' x_2  +\gamma_1 E_{1},\; E_2=Y_2\exp(-W_2)-1,
\end{equation}
and so on. In the same way, for all $\ell\in\{1,\dots,q\}$
\begin{align*}
\frac{\partial W_1}{\partial \gamma_\ell}&=0,\\
\frac{\partial W_2}{\partial \gamma_\ell}&=E_{2-\ell},\\
\frac{\partial W_3}{\partial \gamma_\ell}&=E_{3-\ell}-\gamma_1(1+E_{2})\frac{\partial W_{2}}{\partial \gamma_\ell}
\end{align*}
and so on, where $E_t=0,\; \forall t\leq 0$ and $E_1$, $E_2$ are defined in (\ref{eq:E1}) and (\ref{eq:E2}), respectively.

\subsubsection{\textcolor{black}{Computation of the second derivatives of $W_t$}}\label{subsub:second_derive}

Using (\ref{eq:gradW_beta}) and (\ref{eq:gradW_gamma}), we get that for all $j,k\in\{0,\dots,p\}$, $\ell,m\in\{1,\dots,q\}$ and $t\in\{1,\dots,n\}$,
\begin{align*}
\frac{\partial^2 W_t}{\partial \beta_j\partial \beta_k}&=-\sum_{i=1}^{q\wedge (t-1)}\gamma_i(1+E_{t-i})\frac{\partial^2 W_{t-i}}{\partial \beta_j\partial \beta_k}
-\sum_{i=1}^{q\wedge (t-1)}\gamma_i\frac{\partial E_{t-i}}{\partial\beta_j}\frac{\partial W_{t-i}}{\partial \beta_k}\\
&=-\sum_{i=1}^{q\wedge (t-1)}\gamma_i(1+E_{t-i})\frac{\partial^2 W_{t-i}}{\partial \beta_j\partial \beta_k}
+\sum_{i=1}^{q\wedge (t-1)}\gamma_i(1+E_{t-i})\frac{\partial W_{t-i}}{\partial \beta_j}\frac{\partial W_{t-i}}{\partial \beta_k},\\
\frac{\partial^2 W_t}{\partial \beta_k\partial\gamma_\ell}&=-(1+E_{t-\ell})\frac{\partial W_{t-\ell}}{\partial \beta_k}
-\sum_{i=1}^{q\wedge (t-1)}\gamma_i\left\{\frac{\partial W_{t-i}}{\partial \beta_k}\frac{\partial E_{t-i}}{\partial\gamma_\ell}
                                                            +(1+E_{t-i})\frac{\partial^2 W_{t-i}}{\partial \beta_k\partial\gamma_\ell}\right\}\\
&=-(1+E_{t-\ell})\frac{\partial W_{t-\ell}}{\partial \beta_k}
-\sum_{i=1}^{q\wedge (t-1)}\gamma_i\left\{-(1+E_{t-i})\frac{\partial W_{t-i}}{\partial\beta_k}\frac{\partial W_{t-i}}{\partial \gamma_\ell}
                                                            +(1+E_{t-i})\frac{\partial^2 W_{t-i}}{\partial \beta_k\partial\gamma_\ell}\right\},\\
\frac{\partial^2 W_t}{\partial \gamma_\ell\partial\gamma_m}&=\frac{\partial E_{t-\ell}}{\partial \gamma_m}
-(1+E_{t-m})\frac{\partial W_{t-m}}{\partial \gamma_\ell} 
-\sum_{i=1}^{q\wedge (t-1)}\gamma_i\left\{\frac{\partial W_{t-i}}{\partial \gamma_\ell} \frac{\partial E_{t-i}}{\partial \gamma_m}
+(1+E_{t-i})\frac{\partial^2 W_{t-i}}{\partial \gamma_\ell\partial \gamma_m}\right\}\\
&=-(1+E_{t-\ell})\frac{\partial W_{t-\ell}}{\partial \gamma_m}-(1+E_{t-m})\frac{\partial W_{t-m}}{\partial \gamma_\ell} \\
&-\sum_{i=1}^{q\wedge (t-1)}\gamma_i\left\{-(1+E_{t-i})\frac{\partial W_{t-i}}{\partial \gamma_\ell}\frac{\partial W_{t-i}}{\partial \gamma_m}
+(1+E_{t-i})\frac{\partial^2 W_{t-i}}{\partial \gamma_\ell\partial \gamma_m}\right\}.\\
\end{align*}

To compute the second derivatives of $W_t$, we shall use the following recursive expressions for all $j,k\in\{0,\dots,p\}$
\begin{align*}
\frac{\partial^2 W_1}{\partial \beta_j\partial \beta_k}&=0,\\
\frac{\partial^2 W_2}{\partial \beta_j\partial \beta_k}&=\gamma_1(1+E_1)x_{1,j}x_{1,k},
\end{align*}
where $E_1$ is defined in (\ref{eq:E1}) and so on. Moreover, for all $k\in\{0,\dots,p\}$ and $\ell\in\{1,\dots,q\}$
\begin{align*}
\frac{\partial^2 W_1}{\partial \beta_k\partial\gamma_\ell}&=0,\\
\frac{\partial^2 W_2}{\partial \beta_k\partial\gamma_\ell}&=-(1+E_{2-\ell})\frac{\partial W_{2-\ell}}{\partial \beta_k},
\end{align*}
where $E_t=0$ for all $t\leq 0$ and the first derivatives of $W_t$ are computed in (\ref{eq:gradW_beta}).
Note also that
\begin{align*}
\frac{\partial^2 W_1}{\partial \gamma_\ell\partial\gamma_m}&=0,\\
\frac{\partial^2 W_2}{\partial \gamma_\ell\partial\gamma_m}&=0
\end{align*}
and so on.

\subsection{Computational details for obtaining Criterion (\ref{eq:beta_hat})}\label{sub:var_sec}

By \eqref{eq:Ltilde},
\begin{align*}
\widetilde{L}(\boldsymbol{\beta})=\widetilde{L}(\widetilde{\boldsymbol{\beta}})+\frac{\partial L}{\partial \boldsymbol{\beta}}(\widetilde{\boldsymbol{\beta}},\widehat{\boldsymbol{\gamma}})
U(\boldsymbol{\nu}-\widetilde{\boldsymbol{\nu}})-\frac12 (\boldsymbol{\nu}-\widetilde{\boldsymbol{\nu}})' \Lambda (\boldsymbol{\nu}-\widetilde{\boldsymbol{\nu}}),
\end{align*}
where $\boldsymbol{\nu}-\widetilde{\boldsymbol{\nu}}=U'(\boldsymbol{\beta}-\widetilde{\boldsymbol{\beta}})$.
Hence,
\begin{align*}
\widetilde{L}(\boldsymbol{\beta})&=\widetilde{L}(\widetilde{\boldsymbol{\beta}})+\sum_{k=0}^p
\left(\frac{\partial L}{\partial \boldsymbol{\beta}}(\widetilde{\boldsymbol{\beta}},\widehat{\boldsymbol{\gamma}}) U\right)_k (\nu_k-\widetilde{\nu}_k)
-\frac12\sum_{k=0}^p\lambda_k (\nu_k-\widetilde{\nu}_k)^2\\
&=\widetilde{L}(\widetilde{\boldsymbol{\beta}})-\frac12\sum_{k=0}^p\lambda_k\left(\nu_k-\widetilde{\nu}_k-\frac{1}{\lambda_k}
\left(\frac{\partial L}{\partial \boldsymbol{\beta}}(\widetilde{\boldsymbol{\beta}},\widehat{\boldsymbol{\gamma}}) U\right)_k\right)^2
+\sum_{k=0}^p\frac{1}{2\lambda_k}\left(\frac{\partial L}{\partial \boldsymbol{\beta}}(\widetilde{\boldsymbol{\beta}},\widehat{\boldsymbol{\gamma}}) U\right)_k^2,
\end{align*}
where the $\lambda_k$'s are the diagonal terms of $\Lambda$.

Since the only term depending on $\boldsymbol{\beta}$ is the second one in the last expression of $\widetilde{L}(\boldsymbol{\beta})$,
we define $\widetilde{L}_Q(\boldsymbol{\beta})$ appearing in Criterion (\ref{eq:beta_hat}) as follows:
\begin{eqnarray*}
-\widetilde{L}_Q(\boldsymbol{\beta})&=&\frac12\sum_{k=0}^p\lambda_k\left(\nu_k-\widetilde{\nu}_k+\frac{1}{\lambda_k}
\left(\frac{\partial L}{\partial \boldsymbol{\beta}}(\widetilde{\boldsymbol{\beta}},\widehat{\boldsymbol{\gamma}}) U\right)_k\right)^2\\
&=&\frac12 \left\|\Lambda^{1/2}\left(\boldsymbol{\nu}-\widetilde{\boldsymbol{\nu}}+\Lambda^{-1} \left(\frac{\partial L}{\partial \boldsymbol{\beta}}(\widetilde{\boldsymbol{\beta}},\widehat{\boldsymbol{\gamma}}) U\right)'
\right)\right\|_2^2\\
&=&\frac12 \left\|\Lambda^{1/2}U'(\boldsymbol{\beta}-\widetilde{\boldsymbol{\beta}})+\Lambda^{-1/2} U' \left(\frac{\partial L}{\partial \boldsymbol{\beta}}(\widetilde{\boldsymbol{\beta}},\widehat{\boldsymbol{\gamma}})\right)'
\right\|_2^2\\
&=&\frac12 \left\|\Lambda^{1/2}U'(\widetilde{\boldsymbol{\beta}}-\boldsymbol{\beta})-\Lambda^{-1/2} U' \left(\frac{\partial L}{\partial \boldsymbol{\beta}}(\widetilde{\boldsymbol{\beta}},\widehat{\boldsymbol{\gamma}})\right)'\right\|_2^2\\
&=&\frac12\|\mathcal{Y}-\mathcal{X}\boldsymbol{\beta}\|_2^2,
\end{eqnarray*}
where
\begin{equation}\label{eq:def_Y_X}
\mathcal{Y}=\Lambda^{1/2}U'\widetilde{\boldsymbol{\beta}}
-\Lambda^{-1/2}U'\left(\frac{\partial L}{\partial \boldsymbol{\beta}}(\widetilde{\boldsymbol{\beta}},\widehat{\boldsymbol{\gamma}})\right)' ,\;  \mathcal{X}=\Lambda^{1/2}U'.
\end{equation}

\subsection{Proofs of Propositions \ref{prop1}, \ref{prop2} and \ref{prop3} and of Lemma \ref{lem:aperiodic_doeblin}}

This section contains the proofs of Propositions \ref{prop1}, \ref{prop2} and \ref{prop3}.

\subsubsection{\textcolor{black}{Proof of Proposition \ref{prop1}}}

\textcolor{black}{We first establish the following lemma for proving Proposition \ref{prop1}.}

\begin{lemma}\label{lem:aperiodic_doeblin}
$(W_t^\star)$ is an aperiodic Markov process satisfying Doeblin's condition.
\end{lemma}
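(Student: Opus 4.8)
The plan is to realize $(W_t^\star)$ explicitly as a time-homogeneous Markov chain and then to read off both properties from a single structural feature of its transition kernel. First I would record that, by \eqref{eq:mut_simple} and \eqref{eq:Zt} with $q=1$, one has $W_t^\star=\beta_0^\star+\gamma_1^\star E_{t-1}^\star=\beta_0^\star+\gamma_1^\star\big(Y_{t-1}e^{-W_{t-1}^\star}-1\big)$, so that $W_t^\star$ is a deterministic function of the pair $(W_{t-1}^\star,Y_{t-1})$. Since, by \eqref{eq:Yt}, the conditional law of $Y_{t-1}$ given $\mathcal{F}_{t-2}$ is $\mathcal{P}(e^{W_{t-1}^\star})$ and hence depends on the past only through $W_{t-1}^\star$, the process $(W_t^\star)$ is a homogeneous Markov chain whose transition kernel is the discrete mixture $P(w,\cdot)=\sum_{k\ge 0}p_k(w)\,\delta_{g_k(w)}(\cdot)$, where $p_k(w)=e^{-e^{w}}(e^{w})^{k}/k!$ and $g_k(w)=\beta_0^\star+\gamma_1^\star\big(ke^{-w}-1\big)$.

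The key observation I would exploit is that the image point corresponding to $k=0$, namely $w_0:=g_0(w)=\beta_0^\star-\gamma_1^\star$, does not depend on the current state $w$. Consequently $P(w,\{w_0\})\ge p_0(w)=e^{-e^{w}}>0$ for every state $w$: the single point $w_0$ is accessible from the whole state space in one step. This immediately yields $\psi$-irreducibility with respect to $\delta_{w_0}$, and, taking $w=w_0$, the self-loop $P(w_0,\{w_0\})\ge e^{-e^{w_0}}>0$ shows that the accessible atom $w_0$ has period one, whence the chain is aperiodic.

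For Doeblin's condition I would aim at the uniform minorization $P(w,\cdot)\ge\varepsilon\,\delta_{w_0}(\cdot)$ valid for all $w$ in the state space $S$, with lag $m=1$ and reference measure $\delta_{w_0}$; by the previous bound this reduces to showing $\varepsilon:=\inf_{w\in S}e^{-e^{w}}>0$, i.e. that $S$ is bounded above. Here I would use that, since $Y_t\ge 0$, the working residuals satisfy $E_t^\star\ge -1$, so that $W_t^\star=\beta_0^\star+\gamma_1^\star E_{t-1}^\star$ is confined to $(-\infty,\beta_0^\star-\gamma_1^\star]=(-\infty,w_0]$ as soon as $\gamma_1^\star<0$; in that regime $\varepsilon=e^{-e^{w_0}}>0$ and Doeblin's condition follows at once.

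The main obstacle is precisely this boundedness of $S$, because for $\gamma_1^\star>0$ the constraint $E_{t-1}^\star\ge -1$ only forces $W_t^\star\ge w_0$ while large values of $Y_{t-1}$ push $W_t^\star$ arbitrarily high, so $S$ is unbounded above and $\inf_{w\in S}e^{-e^{w}}=0$; the one-step (indeed any $m$-step) minorization through the atom $w_0$ then degenerates. I would overcome this by complementing the atom structure with a Foster--Lyapunov drift argument: for large $w$ the conditional law of $Y_{t-1}e^{-w}$ concentrates at $1$, so the chain is pulled back toward $\beta_0^\star$, and an exponential test function such as $V(w)=e^{w}$ (for $\gamma_1^\star>0$) should yield a geometric drift inequality outside a compact small set, since a direct computation with the Poisson moment generating function gives $\mathbb{E}[V(W_t^\star)\mid W_{t-1}^\star=w]=\exp\!\big(\beta_0^\star-\gamma_1^\star+e^{w}(e^{\gamma_1^\star e^{-w}}-1)\big)\to e^{\beta_0^\star}$ as $w\to\infty$ while $V(w)\to\infty$. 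Combining this drift with the minorization restricted to the compact small set delivers positive Harris recurrence, hence the stationary distribution invoked in Theorem \ref{theo:MA1} and the ergodicity needed for Propositions \ref{prop1} and \ref{prop3}; carrying out the drift inequality through sharp tail bounds on $\mathcal{P}(e^{w})$ is the delicate computational point, and I expect it to be the crux of the proof.
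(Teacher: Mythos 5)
Your realization of $(W_t^\star)$ as a homogeneous Markov chain and your aperiodicity argument via the state-independent image point $w_0=\beta_0^\star-\gamma_1^\star$ (with the self-loop $P(w_0,\{w_0\})\ge e^{-e^{w_0}}>0$) are correct and coincide with the paper's own argument, and your one-step minorization does settle Doeblin's condition in the regime $\gamma_1^\star<0$. The genuine gap is the case $\gamma_1^\star>0$, which is not a side case: the theorem only assumes $\gamma_1^\star\in\Gamma$ with $0\notin\Gamma$, and all of the paper's simulations use $\gamma_1^\star=0.5$. There you explicitly abandon the stated goal and substitute a Foster--Lyapunov drift argument, concluding positive Harris recurrence. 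That conclusion is strictly weaker than the lemma: by Theorem 16.0.2 of \cite{meyn:tweedie}, Doeblin's condition is equivalent to \emph{uniform} ergodicity, whereas a geometric drift inequality with an unbounded test function (your $V(w)=e^w$) outside a compact small set yields only geometric ergodicity --- a Gaussian AR(1) chain satisfies such a drift yet is not uniformly ergodic, so no drift computation, however sharp, can close this gap by itself. Hence the lemma is not proved for $\gamma_1^\star>0$, and the downstream use in Proposition \ref{prop1}, which invokes precisely the Doeblin/uniform-ergodicity equivalence of Theorem 16.0.2, would no longer be supported as written.

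The fix is available from an ingredient you already computed but aimed at the wrong target. Your concentration observation shows that for $\gamma_1^\star>0$ the kernel has a uniform one-step pull-back: as $w\to+\infty$, $Y_{t-1}e^{-w}$ concentrates at $1$, so $P(w,C)\to 1$ for any compact neighborhood $C$ of $\beta_0^\star$; as $w\to-\infty$, $P(w,\{w_0\})=e^{-e^{w}}\to 1$; and on any bounded range of $w$, $P(w,\{w_0\})=e^{-e^{w}}$ is bounded away from $0$. Writing $C'=C\cup\{w_0\}$, this gives $\inf_{w}P(w,C')>0$, and since $\inf_{v\in C'}P(v,\{w_0\})\ge e^{-e^{\sup C'}}>0$, one obtains the two-step uniform minorization
\[
P^2(w,\cdot)\ \ge\ \delta\,\delta_{w_0}(\cdot)\qquad\text{for all states } w,
\]
i.e.\ the whole state space is small, which is exactly the (equivalent) form of Doeblin's condition required in Theorem 16.0.2 of \cite{meyn:tweedie}. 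This is in substance the argument behind Proposition 2 of \cite{davis:dunsmuir:streett:2003}, to which the paper's own proof simply refers for this step; redirecting your Poisson tail estimates from the drift inequality to this two-step minorization would complete your proof.
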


\begin{proof}[Proof of Lemma \ref{lem:aperiodic_doeblin}]
By (\ref{eq:mut_simple}) and (\ref{eq:Zt}), we observe that:
\begin{equation}\label{eq:Wtstar}
W_t^\star=(\beta_0^\star-\gamma_1^\star)+\gamma_1^\star Y_{t-1}\exp(-W_{t-1}^\star).
\end{equation}
Thus, $\mathcal{F}_{t-2}=\mathcal{F}_{t-1}^{W^\star}:=\sigma(W_s,s\leq t-1)$.
By (\ref{eq:Yt}), the distribution of $Y_{t-1}$ conditionally to $\mathcal{F}_{t-2}$ is $\mathcal{P}(\exp(W_{t-1}^\star))$.
Hence, the distribution of $W_t^\star$ conditionally to $\mathcal{F}_{t-1}^{W^\star}$
is the same as distribution of $W_t^\star$ conditionally to $W_{t-1}^\star$, which means that $(W_t^\star)$ has the Markov property.

Let us now prove that $(W_t^\star)$ is strongly aperiodic which implies that it is aperiodic.
$$
\mathbb{P}(W^\star_{t}=\beta_0^\star-\gamma_1^\star | W^\star_{t-1}=\beta_0^\star-\gamma_1^\star)
=\mathbb{P}(Y_{t-1}=0 | W^\star_{t-1}=\beta_0^\star-\gamma_1^\star)=\exp(-\exp(\beta_0^\star-\gamma_1^\star))>0,
$$
where the first equality comes from (\ref{eq:Wtstar}) and the last equality comes from (\ref{eq:Yt}) since $\mathcal{F}_{t-2}=\mathcal{F}_{t-1}^{W^\star}$.

To prove that $(W_t^\star)$ satisfies Doeblin's condition namely that there exists a probability measure $\nu$ with the property that, for some $m\geq 1$, $\varepsilon>0$
and $\delta >0$,
\begin{equation}\label{eq:doeblin}
\nu(B)>\varepsilon \Longrightarrow \mathbb{P}(W_{t+m-1}\in B,W_{t+m-2}\in B\dots,W_{t+1}\in B,W_{t}\in B|W_{t-1}=x)\geq\delta,
\end{equation}
for all $x$ in the state space $X$ of $W_t^\star$ and $B$ in the Borel sets of $X$, we refer the reader to the proof of Proposition 2 in \cite{davis:dunsmuir:streett:2003}.

\end{proof}

\begin{proof}[Proof of Proposition \ref{prop1}]
For proving Proposition \ref{prop1}, we shall use Theorems 1.3.3 and 1.3.5 of
\cite{taniguchibook:2012}. In order to apply these theorems it is enough to prove that $(W_t^\star)$ is a strictly stationary and ergodic process
since $Y_t W_t(\beta_0^\star,\gamma_1)-\exp(W_t(\beta_0^\star,\gamma_1))$ is a measurable function of $W_{t+1}^\star,W_t^\star,\dots,W_2^\star$. 
Note that the latter fact comes from (\ref{eq:mut_simple}) and (\ref{eq:Zt}) for $Y_t$ and from (\ref{eq:Wt}) with $q=1$ and $p=0$ for $W_t$.

In order to prove that $(W_t^\star)$ is a strictly stationary and ergodic process, we have first to prove that $(W_t^\star)$ is an aperiodic Markov process satisfying Doeblin's condition, 
see Lemma \ref{lem:aperiodic_doeblin}.

The statement of Lemma \ref{lem:aperiodic_doeblin} corresponds to Assertion (iv) of Theorem 16.0.2 of \cite{meyn:tweedie}  which is equivalent to Assertion (i) of this theorem, 
and implies that $(W_t^\star)$ is uniformly ergodic.

Hence, by Definition (16.6) of uniform ergodicity given in \cite{meyn:tweedie}, there exists a unique stationary invariant measure for $(W_t^\star)$, see also the paragraph below Equation (1.3) 
of \cite{Sandric:2017} for an additional justification. Combining that existence of a unique stationary invariant measure for $(W_t^\star)$ with the following arguments shows that $(W_t^\star)$ 
is a strictly stationary process and also an ergodic Markov process.

By Theorem 3.6.3, Corollary 3.6.1 and Definition 3.6.6 of \cite{stout:1974}, if the process $(W_t^\star)$ is started with its unique stationary invariant distribution, $(W_t^\star)$ is a strictly stationary process.

By Definition 3.6.8 of  \cite{stout:1974}, the existence of a unique stationary invariant measure for $(W_t^\star)$ 
means that $(W_t^\star)$ is an ergodic Markov process, see also the paragraph below (b) \cite[p. 717]{Sandric:2017}.

Finally, by Theorem 3.6.5 of  \cite{stout:1974}, since $(W_t^\star)$ is an ergodic Markov process and a strictly stationary process, 
$(W_t^\star)$ is an ergodic and strictly stationary process in the sense of the assumption of Theorem 1.3.5 of \cite{taniguchibook:2012}.
\end{proof}

\subsubsection{\textcolor{black}{Proof of Proposition \ref{prop2}}}

Note that for all $\gamma_1$,
\begin{align*}
\mathcal{L}(\gamma_1)&=\mathbb{E}\left[Y_3 W_3(\beta_0^\star,\gamma_1)-\exp(W_3(\beta_0^\star,\gamma_1))\right]
=\mathbb{E}\left[\mathbb{E}\left[Y_3 W_3(\beta_0^\star,\gamma_1)-\exp(W_3(\beta_0^\star,\gamma_1))|\mathcal{F}_2\right]\right]\\
&=\mathbb{E}\left[\exp(W_3^\star) W_3(\beta_0^\star,\gamma_1)-\exp(W_3(\beta_0^\star,\gamma_1))\right]\\
&=\mathbb{E}\left[\exp(W_3^\star) \left(W_3(\beta_0^\star,\gamma_1)-W_3^\star+W_3^\star-\exp(W_3(\beta_0^\star,\gamma_1)-W_3^\star)\right)\right]\\
&\leq \mathbb{E}\left[\exp(W_3^\star) \left(W_3^\star-1\right)\right]=\mathcal{L}(\gamma_1^\star),
\end{align*}
where the inequality comes from the following inequality $x-\exp(x)\leq -1$, for all $x\in\mathbb{R}$. This inequality is an equality only when 
$x=0$ which means that $\gamma_1=\gamma_1^\star$.

\subsubsection{\textcolor{black}{Proof of Proposition \ref{prop3}}}

The proof of this proposition comes from Proposition \ref{prop1} and the stochastic equicontinuity of $n^{-1}L(\beta_0^\star,\gamma_1)$. Thus, it is enough to prove that
there exists a positive $\delta$ such that
$$
\sup_{|\gamma_1-\gamma_2|\leq\delta}\left|\frac{L(\beta_0^\star,\gamma_1)}{n}-\frac{L(\beta_0^\star,\gamma_2)}{n}\right|\stackrel{p}{\longrightarrow}0, \textrm{ as $n$ tends to infinity.}
$$
Observe that, by (\ref{eq:L:beta_0}),
\begin{align*}
\left|\frac{L(\beta_0^\star,\gamma_1)}{n}-\frac{L(\beta_0^\star,\gamma_2)}{n}\right|
&\leq\frac1n\sum_{t=1}^n Y_t\left|W_t(\beta_0^\star,\gamma_1)-W_t(\beta_0^\star,\gamma_2)\right|\\
&+\frac1n\sum_{t=1}^n\left|\exp\left(W_t(\beta_0^\star,\gamma_1)\right)-\exp\left(W_t(\beta_0^\star,\gamma_2)\right)\right|.
\end{align*}
Let us first focus on bounding the following expression for $t\geq 2$ (since $W_1(\beta_0^\star,\gamma)=\beta_0^\star$, for all $\gamma$). By (\ref{eq:W_Z})
\begin{align*}
&\left|W_t(\beta_0^\star,\gamma_1)-W_t(\beta_0^\star,\gamma_2)\right|=\left|Z_t(\gamma_1)-Z_t(\gamma_2)\right|=\left|\gamma_1 E_{t-1}(\gamma_1)-\gamma_2 E_{t-1}(\gamma_2)\right|\\
&=\left|\gamma_1 \left[Y_{t-1}\exp(-W_{t-1}(\beta_0^\star,\gamma_1))-1\right]-\gamma_2 \left[Y_{t-1}\exp(-W_{t-1}(\beta_0^\star,\gamma_2))-1\right]\right|\\
&=\left|Y_{t-1}\textrm{e}^{-\beta_0^\star}\left[\gamma_1\exp(-Z_{t-1}(\gamma_1))-\gamma_2\exp(-Z_{t-1}(\gamma_2))\right]+\gamma_2-\gamma_1\right|\\
&\leq Y_{t-1}\textrm{e}^{-\beta_0^\star}\left[\left|\gamma_1-\gamma_2\right|\exp(-Z_{t-1}(\gamma_1))+|\gamma_2|\left|\exp(-Z_{t-1}(\gamma_1))-\exp(-Z_{t-1}(\gamma_2))\right|\right]
+\left|\gamma_2-\gamma_1\right|\\
&\leq Y_{t-1}\textrm{e}^{-\beta_0^\star}\left|\gamma_1-\gamma_2\right|\exp(-Z_{t-1}(\gamma_1))\\
&+Y_{t-1}\textrm{e}^{-\beta_0^\star}|\gamma_2|\exp(-Z_{t-1}(\gamma_1))\left|Z_{t-1}(\gamma_1)-Z_{t-1}(\gamma_2)\right|
\exp(|Z_{t-1}(\gamma_1)-Z_{t-1}(\gamma_2)|)\\
&+\left|\gamma_2-\gamma_1\right|,
\end{align*}
where we used in the last inequality that for all $x$ and $y$ in $\mathbb{R}$,
\begin{equation}\label{eq:exp_x_y}
|\textrm{e}^x-\textrm{e}^y|=\textrm{e}^x|1-\textrm{e}^{y-x}|\leq \textrm{e}^x |y-x|\textrm{e}^{|y-x|}.
\end{equation}
Observing that
\begin{equation}\label{eq:exp_Zt}
\exp(-Z_{t}(\gamma_1))=\exp\left(-\gamma_1\left[Y_{t-1}\textrm{e}^{-\beta_0^\star}\exp(-Z_{t-1}(\gamma_1))-1\right]\right),
\end{equation}
and $|Z_{2}(\gamma_1)-Z_{2}(\gamma_2)|\leq\delta[Y_1\textrm{e}^{-\beta_0^\star}+1]$ we get, for $\gamma_1$ and $\gamma_2$ such that $|\gamma_1-\gamma_2|\leq\delta$, that
\begin{equation}\label{eq:diff_W}
\left|W_t(\beta_0^\star,\gamma_1)-W_t(\beta_0^\star,\gamma_2)\right|
\leq\delta\; F(Y_{t-1},Y_{t-2},\dots,Y_1),
\end{equation}
where $F$ is a measurable function.
By (\ref{eq:exp_x_y}),
\begin{align*}
&\left|\exp\left(W_t(\beta_0^\star,\gamma_1)\right)-\exp\left(W_t(\beta_0^\star,\gamma_2)\right)\right|\\
&\leq \exp\left(W_t(\beta_0^\star,\gamma_1)\right)\left|W_t(\beta_0^\star,\gamma_1)-W_t(\beta_0^\star,\gamma_2)\right|\exp\left(\left|W_t(\beta_0^\star,\gamma_1)-W_t(\beta_0^\star,\gamma_2)\right|\right)\\
&\leq \delta G(Y_{t-1},Y_{t-2},\dots,Y_1)
\end{align*}
where the last inequality comes from (\ref{eq:diff_W}), (\ref{eq:exp_Zt}) and (\ref{eq:W_Z}) and where $G$ is a measurable function.
Thus, we get that 
$$
\left|\frac{L(\beta_0^\star,\gamma_1)}{n}-\frac{L(\beta_0^\star,\gamma_2)}{n}\right|\leq\frac{\delta}{n}\sum_{t=1}^n H(Y_t,Y_{t-1},\dots,Y_1),
$$
which gives the result by using similar arguments as those given in the proof of Proposition \ref{prop1} namely that $(Y_t)$ is strictly stationary and ergodic.
By Theorem 1.3.3 of \cite{taniguchibook:2012}, $H(Y_t,Y_{t-1},\dots,Y_1)$ is strictly stationary and ergodic since $(Y_t)$ has these properties. Thus, $\mathbb{E}[|H(Y_t,Y_{t-1},\dots,Y_1)|]<\infty$, which
concludes the proof by Theorem 1.3.5 of \cite{taniguchibook:2012}.



\bibliographystyle{chicago}
\bibliography{biblio}

\end{document}